\documentclass[nosumlimits,twoside]
{amsart}
\usepackage{amsfonts, amsmath, amssymb}
\usepackage[bookmarksnumbered,plainpages,driverfallback=dvipdfm]{hyperref}
\usepackage[english]{babel}
\usepackage{graphicx}
\usepackage{float}
\usepackage{srcltx}
\usepackage[all]{xy}
\usepackage{version}
\usepackage[T1]{fontenc}
\usepackage{xcolor}
\usepackage{enumerate}
\usepackage[normalem]{ulem}
\usepackage{bm}
\usepackage{latexsym}
\usepackage[2emode]{psfrag}
\usepackage{yhmath}
\usepackage{array}
\usepackage{dsfont}
\usepackage{mathrsfs}% for \mathscr
\usepackage{tikz-cd}% diagrams
\usepackage{comment}
\usepackage{mathtools}
\usetikzlibrary{calc,babel}

\usepackage{tikz}
\usepackage{float}

\newtheorem{theorem}{\sc Theorem}[subsection]
\newtheorem{proposition}[theorem]{\sc Proposition}

\newtheorem{lemma}[theorem]{\sc Lemma}
\newtheorem{corollary}[theorem]{\sc Corollary}
\theoremstyle{definition}
\newtheorem{definition}[theorem]{\sc Definition}

\theoremstyle{remark}
\newtheorem{remark}[theorem]{\sc Remark}

\tikzset{
  curve/.style={
    settings={#1},
    to path={
      (\tikztostart)
      .. controls ($(\tikztostart)!\pv{pos}!(\tikztotarget)!\pv{height}!270:(\tikztotarget)$)
      and ($(\tikztostart)!1-\pv{pos}!(\tikztotarget)!\pv{height}!270:(\tikztotarget)$)
      .. (\tikztotarget)\tikztonodes
    },
  },
  settings/.code={%
    \tikzset{quiver/.cd,#1}%
    \def\pv##1{\pgfkeysvalueof{/tikz/quiver/##1}}%
  },
  quiver/.cd,
  pos/.initial=0.35,
  height/.initial=0,
}

\allowdisplaybreaks
\excludeversion{invisible?}
\excludeversion{proof?}

\newcommand{\Id}{{\rm Id}}

\newcommand{\ot}{\otimes}

\newcommand{\Hopfc}{\mathsf{Hopf_{coc}}}
\newcommand{\Hopfcc}{\mathsf{Hopf_{coc}^{com}}}
\newcommand{\HBrc}{\mathsf{HBr_{coc}}}

\DeclareMathOperator{\Hker}{Hker}
\DeclareMathOperator{\Eq}{Eq}

{
\left\{\begin{aligned}}
{\end{aligned}
\right.
}
\title{Central series of cocommutative Hopf braces}
\author{Maria Bevilacqua, Marino Gran, Andrea Sciandra}
\date{}
 \address{%
 \parbox[b]{0.9\linewidth}{Institut de Recherche en Mathématique et Physique, Université Catholique de Louvain, Chemin du Cyclotron 2, B-1348 Louvain-la-Neuve, Belgium.}}
\email{maria.bevilacqua@uclouvain.be}
\email{marino.gran@uclouvain.be}
\address{%
\parbox[b]{0.9\linewidth}{Département de Mathématiques, Université Libre de Bruxelles, Boulevard du Triomphe, B-1050
 Bruxelles, Belgium.}}
\email{andrea.sciandra@ulb.be}
\keywords{Hopf braces, Hopf algebras, semi-abelian categories, central extensions, central series, Hopf formulae.}

\subjclass[2020]{Primary 16T05, 18E13; Secondary 18G50, 18A40, 16T25}

\begin{document}

\begin{abstract}
By extending some classical results known for groups and skew braces, we define and investigate central series of cocommutative Hopf braces. Both left and right central series are defined using a $\star$-product that measures the difference between the two algebra operations, and naturally leads to introducing the notions of socle and of annihilator of a cocommutative Hopf brace. We characterize  the central extensions relative to the subcategories of cocommutative Hopf algebras and of commutative and cocommutative Hopf algebras, respectively. Since the category of cocommutative Hopf braces is semi-abelian and it has enough projectives with respect to the class of cleft extensions, one can then establish suitable Hopf formulae for their homology. These are expressed in terms of the corresponding notions of relative commutators of cocommutative Hopf braces. In particular, the one relative to the subcategory of commutative and cocommutative Hopf algebras turns out to be the Huq commutator. 
\end{abstract}

\maketitle

\section{Introduction}
Skew (left) braces are very interesting algebraic structures that were introduced in \cite{GV} to study solutions of the set-theoretic Yang--Baxter equation (see the book
\cite{CV} for a thorough introduction to the subject). Hopf braces \cite{AGV} can be seen as a natural Hopf-theoretic generalization of skew braces, providing solutions for the quantum Yang--Baxter equation in the cocommutative setting. A Hopf brace is a datum $(H,\cdot,\bullet,1,\Delta,\varepsilon,S,T)$, where $(H,\cdot,1,\Delta,\varepsilon,S)$ and $(H,\bullet,1,\Delta,\varepsilon,T)$ are Hopf algebras satisfying the following compatibility condition
\begin{equation}\label{eq:HBRcomp.}
a\bullet(b\cdot c)=(a_{1}\bullet b)\cdot S(a_{2})\cdot(a_{3}\bullet c), \quad \text{for all}\ a,b,c\in H,
\end{equation} 
where we are employing the Sweedler notation for the comultiplication. A morphism of Hopf braces is a Hopf algebra map with respect to both the two Hopf algebra structures. 
The category $\HBrc$ of \emph{cocommutative} Hopf braces, i.e.\ the ones having a cocommutative comultiplication $\Delta$, was recently shown \cite{GranSciandra} to be a semi-abelian  category \cite{JMT}, so that it is natural to investigate commutators, central series, and homology theories in this context. 

In the present article we first show that it is possible to extend the definition of left and right nilpotency from skew braces \cite{CSV} to cocommutative Hopf braces (Definition \ref{central-series}). It turns out that several interesting results can be established in this more general context, many of which depend on some nice properties of the $\star$-product of Hopf braces (see Lemma \ref{lem:propertiesasterisk}, that extends \cite[Lemma $2.7$]{Tsang}). This operation $\star:H\ot H\to H$, defined by 
\[
a\star b:=S(a_{1})\cdot(a_{2}\bullet b_{1})\cdot S(b_{2}), \quad \text{for any}\ a,b \in H
\]
measures the difference between the two multiplications $\cdot$ and $\bullet$ of the Hopf brace $(H,\cdot,\bullet)$.
The existence of the $\star$-product of Hopf braces naturally leads to introducing a definition of the \emph{socle} $\mathrm{Soc}(H)$ and of the \emph{annihilator} $\mathrm{Ann}(H)$ of a cocommutative Hopf brace $(H,\cdot,\bullet)$, that we then prove to be \emph{normal Hopf subbraces} of $(H,\cdot,\bullet)$ (Proposition \ref{NormalHopf}). 

In Section \ref{sec: relative commutator} we look at the adjunction
\begin{equation}\label{Adj HBr Hopf}
\begin{tikzcd}
\HBrc &&& \Hopfc
	\arrow[" "{name=1, anchor=center, inner sep=0}, curve={height=-8pt}, from=1-1, to=1-4,"F"]
	\arrow[" "{name=1, anchor=center, inner sep=0}, curve={height=-8pt}, from=1-4, to=1-1, "G"]
    \arrow["\vdash"{anchor=center, rotate=90}, draw=none, from=1-4, to=1-1]
\end{tikzcd}
\end{equation}
where $\Hopfc$ is the category of cocommutative Hopf algebras, $G\colon\Hopfc\to \HBrc $ is the functor sending any cocommutative Hopf algebra $(H,\cdot,1,\Delta,\varepsilon,S)$ to the ``trivial'' cocommutative Hopf brace $(H,\cdot,\cdot,1,\Delta,\varepsilon,S,S)$
whose two multiplications coincide, and $F\colon \HBrc \to \Hopfc $ is its left adjoint, sending a Hopf brace $(H,\cdot,\bullet)$ to the quotient
\begin{align*}
     F(H) = \frac{H}{H\cdot(H\star H)^+},
\end{align*}
where 
$(H\star H)^+=\{x\in H\star H\ |\ \varepsilon_{H}(x)=0\}$ and $H\star H$ is the Hopf subalgebra of $H^{\cdot}$ generated by elements $a\star b$, for $a,b\in H$. 
We characterize the central extensions of cocommutative Hopf braces relative to this adjunction (Proposition \ref{centralExt}), and this allows us to obtain a Hopf formula for the second homology of a cocommutative Hopf brace (Proposition \ref{prop: H2 Baer invariant}), and to establish a Stallings--Stammbach $5$-term exact sequence for homology (Theorem \ref{thm: Stallings-Stammbach}). The relative commutator $[ \cdot, \cdot]_{\Hopfc}$ that is naturally associated with the adjunction \eqref{Adj HBr Hopf}
yields a notion of nilpotency, with the property that
the $n$-nilpotent Hopf braces form a Birkhoff subcategory $\mathsf{Nil}^{(n)} $ of $\HBrc$ (for any positive integer $n$, see Proposition \ref{Birkhoff}). It turns out that the notions of nilpotency and of central series arising in this way coincide with the natural Hopf-theoretic version of the \emph{right nilpotency} of skew braces and their central series, that have already been studied in the literature (see \cite{Tsang}, and the references therein). 

In Section \ref{abelian} we look at the adjunction 
\begin{equation}\label{adj Hbr Hopfcc}
\begin{tikzcd}
\HBrc &&& \mathsf{Hopf}_{\mathsf{coc}}^{\mathsf{com}}
	\arrow[" "{name=1, anchor=center, inner sep=0}, curve={height=-8pt}, from=1-1, to=1-4,"\mathsf{ab}"]
	\arrow[" "{name=1, anchor=center, inner sep=0}, curve={height=-8pt}, from=1-4, to=1-1, "U"]
    \arrow["\vdash"{anchor=center, rotate=90}, draw=none, from=1-4, to=1-1]
\end{tikzcd}
\end{equation}
\noindent where $U \colon \mathsf{Hopf}_{\mathsf{coc}}^{\mathsf{com}} \rightarrow \HBrc $ is the functor sending any commutative and cocommutative Hopf algebra to the cocommutative Hopf brace whose two multiplications coincide and are commutative, while the left adjoint $ \mathsf{ab} \colon \HBrc \rightarrow \mathsf{Hopf}_{\mathsf{coc}}^{\mathsf{com}}  $ 
is the abelianisation functor in the semi-abelian category $\HBrc$, that has been explicitly described in \cite[Corollary 8.8]{GranSciandra}. Indeed, the full subcategory $\mathsf{Hopf}_{\mathsf{coc}}^{\mathsf{com}} $ of commutative and cocommutative Hopf algebras coincides with the category of \emph{abelian objects} in $\HBrc$. The notions of commutator and of central extension corresponding to this second adjunction \eqref{adj Hbr Hopfcc} agree with the Huq commutator of normal subobjects \cite{Huq} and with the categorical notion of central extension \cite{BournGran}. These give rise to a different notion of central series (Proposition \ref{prop: Gamma_n normal}), that in the special case of skew braces correspond to the ones considered in \cite{BP, KanrarRoelantsYadav}.

%\section{Preliminaries}*
%We recall some notions and results that will be useful throughout the paper.
%\begin{definition}[{\cite[Definition 1.1]{AGV}}]
%A \textit{Hopf brace} is a datum $(H,\cdot,\bullet,1,\Delta,\varepsilon,S,T)$ where $(H,\cdot,1,\Delta,\varepsilon,S)$ and $(H,\bullet,1,\Delta,\varepsilon,T)$ are Hopf algebras satisfying the following compatibility condition:
%\begin{equation}\label{eq:HBRcomp.}
%a\bullet(b\cdot c)=(a_{1}\bullet b)\cdot S(a_{2})\cdot(a_{3}\bullet c), \quad \text{for all}\ a,b,c\in H.
%\end{equation}
%A morphism of Hopf braces $f:(H,\cdot,\bullet)\to(K,\cdot,\bullet)$ is a Hopf algebra morphism with respect to both the Hopf algebra structures.
% , i.e. it satisfies
% \[
% f(a\cdot b)=f(a)\cdot f(b),\quad f(a\bullet b)=f(a)\bullet f(b),\quad f(1_{H})=1_{K}
% \]
% and
% \[
% \varepsilon(f(a))=\varepsilon(a),\quad \Delta(f(a))=f(a_{1})\otimes f(a_{2}), 
% \]
% for all $a,b\in H$. Any morphism of Hopf braces automatically preserves the antipodes, so that $f S_{H}=S_{K}f$ and $fT_{H}=T_{K}f$. 
%A Hopf brace is \textit{cocommutative} if the comultiplication $\Delta$ is cocommutative. We denote the category of cocommutative Hopf braces by $\HBrc$.
%\end{definition}
% Hopf braces provide a Hopf-theoretic generalization of \textit{skew braces}, introduced in \cite{GV}: these are objects $(G,\cdot,\bullet)$, where $(G,\cdot)$ and $(G,\bullet)$ are groups and, for all $a,b,c\in G$, the following equality is satisfied:
% $a\bullet(b\cdot c)=(a\bullet b)\cdot a^{-1}\cdot(a\bullet c)$, where $a^{-1}$ denotes the inverse of $a$ in $(G,\cdot)$.

\section{Central series for cocommutative Hopf braces}

We will usually denote a cocommutative Hopf brace simply by $(H,\cdot,\bullet)$, and adopt the notations $H^{\cdot}:=(H,\cdot,1,\Delta,\varepsilon,S)$ and $H^{\bullet}:=(H,\bullet,1,\Delta,\varepsilon,T)$ for the two Hopf algebra structures. The Hopf algebra $H^{\cdot}$ of a Hopf brace $(H,\cdot,\bullet)$ is a left $H^{\bullet}$-module algebra (see \cite{AGV}), where the left $H^{\bullet}$-action is defined by:
\begin{equation}
\label{def:leftaction}
a\rightharpoonup b:=S(a_{1})\cdot(a_{2}\bullet b),\quad \text{for all }a,b\in H.
\end{equation}
In particular, the following equations are satisfied, for all $a,b,c\in H$:
\[
a\rightharpoonup1=\varepsilon(a)1,
\qquad a\rightharpoonup(b\cdot c)=(a_{1}\rightharpoonup b)\cdot(a_{2}\rightharpoonup c).
\]
Moreover, $H^{\cdot}$ becomes a Hopf algebra in the category of left $H^{\bullet}$-modules (with braiding given by the flip map), so the following equalities hold, for all $a,b\in H$:
\[
\Delta(a\rightharpoonup b)=(a_{1}\rightharpoonup b_{1})\ot(a_{2}\rightharpoonup b_{2}),\quad \varepsilon(a\rightharpoonup b)=\varepsilon(a)\varepsilon(b),\quad S( a \rightharpoonup b) = a \rightharpoonup S(b).
\]
The definition of Hopf brace also implies the following equalities (see \cite{AGV})
\[
a\bullet b=a_{1}\cdot(a_{2}\rightharpoonup b),\quad a\cdot b=a_{1}\bullet(T(a_{2})\rightharpoonup b),\quad S(a)=a_{1}\rightharpoonup T(a_{2}).
\]
By extending the notion of \textit{left ideal} for a skew brace, we introduce:

\begin{definition}
    Let $(H,\cdot,\bullet)$ be a cocommutative Hopf brace. A vector subspace $B\subseteq H$ is a \textit{Hopf strong subbrace} if it is a Hopf subalgebra $B^{\cdot}$ of $H^{\cdot}$ and $h\rightharpoonup b\in B$, for all $h\in H,b\in B$.
\end{definition}

\begin{remark}\label{rmk:Hopfsubbullet}
Since $B^{\cdot}$ is a Hopf subalgebra of $H^{\cdot}$ and $H\rightharpoonup B\subseteq B$, given $b,b'\in B$, one has $b\bullet b'=b_{1}\cdot(b_{2}\rightharpoonup b')\in B$ and, since $H$ is cocommutative, we have 
\[
T(b)=SST(b)=S(ST(b_{1})\cdot(T(b_{2})\bullet b_{3}))=S(T(b_{1})\rightharpoonup b_{2})\in B.
\]
Hence, $B^{\bullet}$ is a Hopf subalgebra of $H^{\bullet}$ and so $(B,\cdot,\bullet)$ is a Hopf subbrace of $(H,\cdot,\bullet)$. 
\end{remark}

We define the operation $\star:H\ot H\to H$ in the following way:
\begin{equation}\label{def:star}
a\star b:=S(a_{1})\cdot(a_{2}\bullet b_{1})\cdot S(b_{2})=(a\rightharpoonup b_{1})\cdot S(b_{2}),\quad \text{for any}\ a,b \in H.
\end{equation}

This operation was introduced and first studied in \cite[Section 2]{KSV} for skew braces, as a tool that measures the difference between the two underlying group operations $\cdot$ and $\bullet$ of a skew brace. By analogy, we call it \textit{asterisk} or \textit{star product}. 

\begin{remark}\label{rmk:closureaction}
    Given a cocommutative Hopf brace $(H,\cdot,\bullet)$ and a Hopf subalgebra $B^{\cdot}$ of $H^{\cdot}$, for any $a\in H$, we have that $a\rightharpoonup b\in B$ for all $b\in B$ if and only if $a\star b=(a\rightharpoonup b_{1})\cdot S(b_{2})\in B$ for all $b\in B$. We also observe that $\Delta(a\star b)=(a_{1}\star b_{1})\ot(a_{2}\star b_{2})$ and $\varepsilon(a\star b)=\varepsilon(a)\varepsilon(b)$, i.e.\ $\star$ is a morphism of coalgebras.
\end{remark}

First, we prove the following result, that generalizes \cite[Lemma 2.7]{Tsang}.

\begin{lemma}\label{lem:propertiesasterisk}
    Let $(H,\cdot,\bullet)$ be a cocommutative Hopf brace. For any $a,x,y\in H$, we have:
\begin{itemize}
    \item[1)] $a\star(x\cdot y)= (a_{1}\star x_{1})\cdot x_{2}\cdot(a_{2}\star y)\cdot S(x_{3})$,
    \item[2)] $(x\bullet y)\star a=(x_{1}\star(y_{1}\star a_{1}))\cdot(y_{2}\star a_{2})\cdot(x_{2}\star a_{3})$,
    \item[3)] $a\rightharpoonup(x\star y)=(a_{1}\bullet x\bullet T(a_{2}))\star (a_{3}\rightharpoonup y)$,
    \item[4)] $a_{1}\bullet x\bullet T(a_{2})=a_{1}\cdot(a_{2}\rightharpoonup(x_{1}\cdot(x_{2}\star T(a_{3}))))\cdot S(a_{4})$.
\end{itemize}
\end{lemma}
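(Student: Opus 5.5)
All four identities will be proved by direct computation in Sweedler notation. Throughout I will use cocommutativity freely, so that Sweedler indices can be permuted. The tools are the rewriting $a\star b=(a\rightharpoonup b_{1})\cdot S(b_{2})$ together with the module-algebra properties of $\rightharpoonup$ recalled above: $a\rightharpoonup(b\cdot c)=(a_1\rightharpoonup b)\cdot(a_2\rightharpoonup c)$ and $S(a\rightharpoonup b)=a\rightharpoonup S(b)$. I will also use that $\rightharpoonup$ is an action of $H^{\bullet}$, i.e.\ $(x\bullet y)\rightharpoonup b=x\rightharpoonup(y\rightharpoonup b)$, and the identity $a\bullet b=a_1\cdot(a_2\rightharpoonup b)$. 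The key inversion formula is $a\rightharpoonup b=(a\star b_1)\cdot b_2$, which follows from the definition since $S(b_2)\cdot b_3=\varepsilon(b_2)1$.

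For 1), I expand $a\star(x\cdot y)=(a_1\rightharpoonup x_1)\cdot(a_2\rightharpoonup y_1)\cdot S(y_2)\cdot S(x_2)$. I then replace $a_1\rightharpoonup x_1$ by $(a_1\star x_1)\cdot x_2$ and insert $S(x_3)$ at the end. The remaining middle factor is $(a_2\rightharpoonup y_1)\cdot S(y_2)=a_2\star y$, so the formula follows.

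For 2), I start from $(x\bullet y)\star a=(x\rightharpoonup(y\rightharpoonup a_1))\cdot S(a_2)$. I write $y\rightharpoonup a_1=(y_1\star a_1)\cdot a_2$, and then apply $x\rightharpoonup$ multiplicatively to get $(x_1\rightharpoonup(y_1\star a_1))\cdot(x_2\rightharpoonup a_2)\cdot S(a_3)$. Next I convert $x_1\rightharpoonup(y_1\star a_1)$ into $(x_1\star(y_1\star a_1))\cdot(y_2\star a_2)$, using that $\star$ is a coalgebra map (Remark~\ref{rmk:closureaction}) to split $\Delta(y\star a)$. The leftover factor $(x_2\rightharpoonup a_3)\cdot S(a_4)$ is $x_2\star a_3$.

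For 4), I first simplify the right-hand side. The product $x_1\cdot(x_2\star T(a_3))$ equals $x_1\cdot S(x_2)\cdot(x_3\bullet T(a_3)_1)\cdot S(T(a_3)_2)=(x\bullet T(a_3))\cdot S(T(a_4))$. Then $a_2\rightharpoonup$ of this is $(a_2\rightharpoonup(x\bullet T(a_4)))\cdot(a_3\rightharpoonup ST(a_5))$. Using $a\bullet b=a_1\cdot(a_2\rightharpoonup b)$, the left part gives $a_1\cdot(a_2\rightharpoonup(x\bullet T(a_4)))=a_1\bullet x\bullet T(a_4)$. What remains to show is $(a_3\rightharpoonup ST(a_5))\cdot S(a_6)=\varepsilon(\cdot)1$ after reordering. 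Writing $a\rightharpoonup ST(a')=S(a\rightharpoonup T(a'))$ and using $a_1\rightharpoonup T(a_2)=S(a)$, this becomes $SS(a_3)\cdot S(a_4)=a_3\cdot S(a_4)=\varepsilon$, since $S^2=\mathrm{id}$ by cocommutativity. Cocommutativity is what lets me group the relevant indices of $a$ adjacently.

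For 3), which I expect to be the main obstacle, I will compute $a\rightharpoonup(x\star y)=(a_1\rightharpoonup(x\rightharpoonup y_1))\cdot S(a_2\rightharpoonup y_2)$. For the first factor I write $a_1\bullet x=a_1\bullet x\bullet T(a_2)\bullet a_3$, which is legitimate because $T(a_2)\bullet a_3=\varepsilon(a_2)1$. Since $\rightharpoonup$ is an action, this gives
\[
a_1\rightharpoonup(x\rightharpoonup y_1)=(a_1\bullet x\bullet T(a_2))\rightharpoonup(a_3\rightharpoonup y_1).
\]
Set $z=a_1\bullet x\bullet T(a_2)$ and $w=a_3\rightharpoonup y$. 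Because $\Delta(a\rightharpoonup y)=(a_1\rightharpoonup y_1)\ot(a_2\rightharpoonup y_2)$, the expression becomes $(z\rightharpoonup w_1)\cdot S(w_2)=z\star w$. The delicate point is the bookkeeping: the coproduct of $w$ must match the indices of $a$ used in the two factors. Cocommutativity is needed to reorder so that the $a$-indices inside $z$ are separated from those in $w$. I will check this index matching carefully.
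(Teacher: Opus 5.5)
Your proposal is correct. Items 1) and 3) follow the paper's computations essentially verbatim. In 3) the index bookkeeping you are worried about is automatic: after writing $a_1\bullet x=a_1\bullet x\bullet T(a_2)\bullet a_3$ you get $z=a_1\bullet x\bullet T(a_2)$, $w_1=a_3\rightharpoonup y_1$ and $w_2=a_4\rightharpoonup y_2$, already in order by coassociativity. Item 2) is the paper's computation run in reverse. The paper expands the right-hand side until it collapses to $((x\bullet y)\rightharpoonup a_1)\cdot S(a_2)$, whereas you start from the left and apply the inversion $a\rightharpoonup b=(a\star b_1)\cdot b_2$ twice, which is cleaner. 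One notational slip there: the first step should read $y\rightharpoonup a_1=(y\star a_1)\cdot a_2$, since $y$ only splits once you use $\Delta(y\star a_1)=(y_1\star a_1)\otimes(y_2\star a_2)$.

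The genuine difference is in 4). The paper derives it from 3): it rewrites $a_3\rightharpoonup(x_2\star T(a_4))$ as $(a_3\bullet x_2\bullet T(a_4))\star(a_5\rightharpoonup T(a_6))$ and unfolds the star product. It then recombines the nested actions via $(u\bullet v)\rightharpoonup b=u\rightharpoonup(v\rightharpoonup b)$, which in effect undoes 3), and finishes with two applications of $u\bullet v=u_1\cdot(u_2\rightharpoonup v)$ together with $a_1\bullet T(a_2)=\varepsilon(a)1$. You instead collapse $x_1\cdot(x_2\star T(a_3))=(x\bullet T(a_3))\cdot ST(a_4)$ before acting, and split with the module-algebra property. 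You then recognize $a_1\cdot(a_2\rightharpoonup(x\bullet T(a_3)))=a_1\bullet x\bullet T(a_2)$ and absorb the remainder using $a_1\rightharpoonup T(a_2)=S(a)$ and $S^2=\mathrm{id}$. Your route is shorter and makes 4) independent of 3). The paper's route presents 4) as a consequence of 3), at the price of a detour. Both arguments rely on cocommutativity in the same way, to reorder Sweedler indices.
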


\begin{proof}
    We compute
\begin{align*}
a\star(x\cdot y)&=(a\rightharpoonup(x\cdot y)_{1})\cdot S((x\cdot y)_{2})=(a\rightharpoonup x_{1}\cdot y_{1})\cdot S(x_{2}\cdot y_{2})\\&=(a_{1}\rightharpoonup x_{1})\cdot(a_{2}\rightharpoonup y_{1})\cdot S(y_{2})\cdot S(x_{2})\\&=(a_{1}\rightharpoonup x_{1})\cdot S(x_{2})\cdot x_{3}\cdot(a_{2}\rightharpoonup y_{1})\cdot S(y_{2})\cdot S(x_{4})\\&=(a_{1}\star x_{1})\cdot x_{2}\cdot(a_{2}\star y)\cdot S(x_{3}),
\end{align*}
and also
\begin{align*}
    &(x_{1}\star (y_{1}\star a_{1}))\cdot(y_{2}\star a_{2})\cdot(x_{2}\star a_{3})\\&=(x_{1}\star ((y_{1}\rightharpoonup a_{1})\cdot S(a_{2})))\cdot(y_{2}\rightharpoonup a_{3})\cdot S(a_{4})\cdot(x_{2}\star a_{5})\\&=(x_{1}\rightharpoonup(y_{1}\rightharpoonup a_{1})\cdot S(a_{2}))\cdot S((y_{2}\rightharpoonup a_{3})\cdot S(a_{4}))\cdot (y_{3}\rightharpoonup a_{5})\cdot S(a_{6})\cdot(x_{2}\rightharpoonup a_{7})\cdot S(a_{8})\\&=(x_{1}\bullet y_{1}\rightharpoonup a_{1})\cdot(x_{2}\rightharpoonup S(a_{2}))\cdot a_{3}\cdot S(y_{2}\rightharpoonup a_{4})\cdot (y_{3}\rightharpoonup a_{5})\cdot S(a_{6})\cdot(x_{3}\rightharpoonup a_{7})\cdot S(a_{8})\\&=(x_{1}\bullet y\rightharpoonup a_{1})\cdot(x_{2}\rightharpoonup S(a_{2}))\cdot a_{3}\cdot S(a_{4})\cdot(x_{3}\rightharpoonup a_{5})\cdot S(a_{6})\\&=(x_{1}\bullet y\rightharpoonup a_{1})\cdot(x_{2}\rightharpoonup S(a_{2}))\cdot (x_{3}\rightharpoonup a_{3})\cdot S(a_{4})\\&=(x_{1}\bullet y\rightharpoonup a_{1})\cdot(x_{2}\rightharpoonup S(a_{2})\cdot a_{3})\cdot S(a_{4})\\&=(x\bullet y\rightharpoonup a_{1})\cdot S(a_{2})=(x\bullet y)\star a,
\end{align*}
proving that 1) and 2) are satisfied. Moreover, we have
\begin{align*}
a\rightharpoonup(x\star y)&=a\rightharpoonup((x\rightharpoonup y_{1})\cdot S(y_{2}))= (a_{1}\rightharpoonup(x\rightharpoonup y_{1}))\cdot(a_{2}\rightharpoonup S(y_{2}))\\&=(a_{1}\bullet x\rightharpoonup y_{1})\cdot(a_{2}\rightharpoonup S(y_{2}))=(a_{1}\bullet x\bullet T(a_{2})\rightharpoonup(a_{3}\rightharpoonup y_{1}))\cdot S(a_{4}\rightharpoonup y_{2})\\&=(a_{1}\bullet x\bullet T(a_{2}))\star(a_{3}\rightharpoonup y)
\end{align*}
and also
\begin{align*}
&a_{1}\cdot(a_{2}\rightharpoonup(x_{1}\cdot(x_{2}\star T(a_{3}))))\cdot S(a_{4})\\&=a_{1}\cdot(a_{2}\rightharpoonup x_{1})\cdot(a_{3}\rightharpoonup x_{2}\star T(a_{4}))\cdot S(a_{5})\\&\overset{3)}{=}a_{1}\cdot(a_{2}\rightharpoonup x_{1})\cdot((a_{3}\bullet x_{2}\bullet T(a_{4}))\star (a_{5}\rightharpoonup T(a_{6})))\cdot S(a_{7})\\&=a_{1}\cdot(a_{2}\rightharpoonup x_{1})\cdot(a_{3}\bullet x_{2}\bullet T(a_{4})\rightharpoonup(a_{5}\rightharpoonup T(a_{6})))\cdot S(a_{7}\rightharpoonup T(a_{8}))\cdot S(a_{9})\\&=(a_{1}\bullet x_{1})\cdot(a_{2}\bullet x_{2}\rightharpoonup T(a_{3}))\cdot S(a_{4}\cdot(a_{5}\rightharpoonup T(a_{6})))\\&=(a_{1}\bullet x)_{1}\cdot((a_{1}\bullet x)_{2}\rightharpoonup T(a_{2}))\cdot S(a_{3}\bullet T(a_{4}))\\&=a_{1}\bullet x\bullet T(a_{2}),
\end{align*}
so 3) and 4) are also satisfied.
\end{proof}

We recall the following definition:
\begin{definition}[{\cite[Definition 4.4]{GranSciandra}}]\label{def:normal}
A Hopf subbrace $(B,\cdot,\bullet)$ of a Hopf brace $(H,\cdot,\bullet)$ is \textit{normal} if, for all $a\in H,b\in B$, the following are satisfied:
\begin{itemize}
    \item[1)] $a_{1}\cdot b\cdot S(a_{2})\in B$, i.e., $B^{\cdot}$ is a normal Hopf subalgebra of $H^{\cdot}$,
    \item[2)] $a_{1}\bullet b\bullet T(a_{2})\in B$, i.e., $B^{\bullet}$ is normal Hopf subalgebra of $H^{\bullet}$,
    \item[3)] $a\rightharpoonup b\in B$, where $\rightharpoonup$ is defined as in \eqref{def:leftaction}.
\end{itemize}
\end{definition}

\begin{remark}
As proven in \cite[Lemma 4.3]{GranSciandra}, the condition $H\rightharpoonup B\subseteq B$ implies that $H\cdot B^{+}=H\bullet B^{+}$, where $B^{+}=\{x\in B\ |\ \varepsilon(x)=0\}$, and then the fact that $(B,\cdot,\bullet)$ is a normal Hopf subbrace of $(H,\cdot,\bullet)$ is equivalent to $I:=H\cdot B^{+}=H\bullet B^{+}$ being a Hopf brace ideal and to the fact that $(B,\cdot,\bullet)$ is the categorical kernel in $\HBrc$ of the canonical quotient $\pi:(H,\cdot,\bullet)\to(A/I,\overline{\cdot},\overline{\bullet})$, as shown in \cite[Proposition 4.6]{GranSciandra}. 
\end{remark}

The following result provides an equivalent characterization for normal Hopf subbraces using the properties of the map $\star$.

\begin{corollary}\label{cor:equivalentnotionsnormal}
    Let $(H,\cdot,\bullet)$ be a cocommutative Hopf brace. Then $(B,\cdot,\bullet)$ is a normal Hopf subbrace of $(H,\cdot,\bullet)$ if and only if $B^{\cdot}$ is a normal Hopf subalgebra of $H^{\cdot}$, $b\star a\in B$ and $a\star b\in B$ for all $a\in H$ and $b\in B$.
\end{corollary}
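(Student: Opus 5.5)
We prove the two implications separately, using Remark \ref{rmk:closureaction} to trade the condition $H\rightharpoonup B\subseteq B$ for $H\star B\subseteq B$. Lemma \ref{lem:propertiesasterisk}, item 4), will handle $\bullet$-normality.

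\emph{($\Rightarrow$)} Assume $(B,\cdot,\bullet)$ is normal. Condition 1) of Definition \ref{def:normal} is the $\cdot$-normality. From condition 3) and Remark \ref{rmk:closureaction}, $a\star b=(a\rightharpoonup b_1)\cdot S(b_2)\in B$. For $b\star a$ we expand
\[
b\star a=S(b_1)\cdot(b_2\bullet a_1)\cdot S(a_2).
\]
We would rewrite $b_2\bullet a_1$ as $(b_2\bullet a_1\bullet T(b_3))\bullet b_4$. Then $b\bullet a\bullet T(b)$-type terms are controlled via $\bullet$-normality of $B$. The cleaner route is to use the identity
\[
(b_1\bullet a_1)\cdot S(a_2)=b_1\cdot\bigl(b_2\rightharpoonup a_1\bigr)\cdot S(a_2),
\]
and express $b_2\rightharpoonup a_1$ as $(b_2\bullet a_1)$ conjugated appropriately. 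More directly, write $b\bullet a=a_1\bullet(T(a_2)\bullet b\bullet a_3)$. Here $b':=T(a_2)\bullet b\bullet a_3\in B$ by $\bullet$-normality (using cocommutativity so that $T$-conjugation is also admissible). Then
\[
b\bullet a=a_1\bullet b'=a_1\cdot(a_2\rightharpoonup b').
\]
Substituting, with Sweedler indices handled by cocommutativity, gives
\[
b\star a=S(b_1)\cdot a_1\cdot(a_2\rightharpoonup b'_{(\ldots)})\cdot S(a_3).
\]
This is a product of elements of $B$: the middle part is a $\cdot$-conjugate of $a\rightharpoonup b'\in B$, and $S(b_1)\in B$. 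The bookkeeping of which factor of $\Delta(b)$ goes where is routine, because $\Delta$ is cocommutative and $B$ is a subcoalgebra.

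\emph{($\Leftarrow$)} Assume $B^{\cdot}$ is $\cdot$-normal and $H\star B,\ B\star H\subseteq B$. By Remark \ref{rmk:closureaction}, $H\rightharpoonup B\subseteq B$, which is condition 3). It also makes $B$ a Hopf strong subbrace, so by Remark \ref{rmk:Hopfsubbullet} $(B,\cdot,\bullet)$ is a Hopf subbrace. It remains to get condition 2). By Lemma \ref{lem:propertiesasterisk} 4),
\[
a_1\bullet b\bullet T(a_2)=a_1\cdot\Bigl(a_2\rightharpoonup\bigl(b_1\cdot(b_2\star T(a_3))\bigr)\Bigr)\cdot S(a_4).
\]
We argue from the inside out:
\begin{itemize}
\item $b_2\star T(a_3)\in B\star H\subseteq B$;
\item hence $b_1\cdot(b_2\star T(a_3))\in B$, since $B$ is a subalgebra;
\item applying $a_2\rightharpoonup$ keeps it in $B$ by condition 3);
\item the outer $\cdot$-conjugation by $a$ keeps it in $B$ by $\cdot$-normality.
\end{itemize}
Formally one should check that the Sweedler legs split correctly: the expression is a linear combination of terms of the form $a'\cdot(a''\rightharpoonup x)\cdot S(a''')$ with $x\in B$. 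This follows because $B\star H\subseteq B$ means that $b\star h\in B$ for all $b\in B$ and $h\in H$, so each tensor leg lies in $B$.

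The main obstacle is the forward direction's claim $b\star a\in B$. It requires a careful Sweedler manipulation combining $\bullet$-normality with the action. If the rewriting above gets messy, I would instead use Lemma \ref{lem:propertiesasterisk} 4) read backwards, or prove $b\star a\in B$ by expressing $b\star a$ via $(b\bullet a\bullet T(b))$-conjugates and the identity $a\cdot b=a_1\bullet(T(a_2)\rightharpoonup b)$.
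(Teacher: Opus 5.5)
Your proof is correct. The converse implication is the paper's argument: Remark \ref{rmk:closureaction} gives $H\rightharpoonup B\subseteq B$, Remark \ref{rmk:Hopfsubbullet} gives the Hopf subbrace, and item 4) of Lemma \ref{lem:propertiesasterisk}, read from the inside out, gives $\bullet$-normality.

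For the forward claim $b\star a\in B$ you take a genuinely different route. The paper reuses item 4) backwards, writing
\[
b\star T(a)=S(b_{1})\cdot\Big(T(a_{1})\rightharpoonup\big(S(a_{2})\cdot(a_{3}\bullet b_{2}\bullet T(a_{4}))\cdot a_{5}\big)\Big),
\]
and then uses bijectivity of $T$. So one identity drives both directions. You instead rewrite $b\bullet a=a_1\bullet(T(a_2)\bullet b\bullet a_3)$ and expand with $x\bullet y=x_1\cdot(x_2\rightharpoonup y)$. After a cocommutative reshuffle of Sweedler legs this gives
\[
b\star a=S(b_1)\cdot a_{1}\cdot\big(a_{3}\rightharpoonup(T(a_{4})\bullet b_2\bullet a_{5})\big)\cdot S(a_{2}),
\]
which is $S(b_1)$ times the $\cdot$-conjugate by $a_{(1)}$ of an element of $H\rightharpoonup B$. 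This is the linearization of the skew brace identity $b\star a=b^{-1}\cdot a\cdot\lambda_a(\bar a\bullet b\bullet a)\cdot a^{-1}$, where $\lambda_a(x)=a^{-1}\cdot(a\bullet x)$ and $\bar a$ is the $\bullet$-inverse.

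Comparing the two routes:
\begin{itemize}
\item Yours is more elementary and does not need Lemma \ref{lem:propertiesasterisk} in this direction.
\item You use $T^2=\mathrm{Id}$ to get $T(a_1)\bullet b\bullet a_2\in B$ from the stated $\bullet$-normality. The paper uses bijectivity of $T$ at the same point, so the hypotheses consumed are the same.
\item The two formulas differ only in whether the $\cdot$-conjugation sits inside or outside the action $a_1\rightharpoonup -$.
\end{itemize}

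In a write-up, delete your two preliminary attempts. In particular, rewriting $b_2\bullet a_1$ as $(b_2\bullet a_1\bullet T(b_3))\bullet b_4$ conjugates $a$ by $b$, which no hypothesis controls. Also display the regrouped formula above explicitly. Your claim that the middle factor is a conjugate rests on moving $S(a_5)$ next to $a_1$, which uses cocommutativity.
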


\begin{proof}
Suppose that $(B,\cdot,\bullet)$ is a normal Hopf subbrace of $(H,\cdot,\bullet)$. In particular, $B^{\cdot}$ is a normal Hopf subalgebra of $H^{\cdot}$ and, by Remark \ref{rmk:closureaction}, $a\star b\in B$ for all $a\in H$, $b\in B$. Moreover, by 4) of Lemma \ref{lem:propertiesasterisk}, we have
    \[
    b\star T(a)=S(b_{1})\cdot\Big(T(a_{1})\rightharpoonup\big(S(a_{2})\cdot(a_{3}\bullet b_{2}\bullet T(a_{4}))\cdot a_{5}\big)\Big)\in B
    \]
since $B^{\bullet}$ is a normal Hopf subalgebra of $H^{\bullet}$, $B^{\cdot}$ is a normal Hopf subalgebra of $H^{\cdot}$ and $H\rightharpoonup B\subseteq B$. Then, since $T$ is bijective, $b\star a\in B$ for all $a\in H$ and $b\in B$. 
    
Now, suppose that $B^{\cdot}$ is a normal Hopf subalgebra of $H^{\cdot}$, $b\star a\in B$ and $a\star b\in B$ for all $a\in H$, $b\in B$. By Remark \ref{rmk:closureaction}, we have $H\rightharpoonup B\subseteq B$. Then, by Remark \ref{rmk:Hopfsubbullet}, $B$ is a Hopf subalgebra of $H^{\bullet}$. It is also normal since, by 4) of Lemma \ref{lem:propertiesasterisk}, we have
\[
a_{1}\bullet b\bullet T(a_{2})=a_{1}\cdot(a_{2}\rightharpoonup(b_{1}\cdot(b_{2}\star T(a_{3}))))\cdot S(a_{4})\in B
\]
since $b\star a\in B$ for all $b\in B$ and $a\in H$, $H\rightharpoonup B\subseteq B$ and $B^{\cdot}$ is normal in $H^{\cdot}$.
\end{proof}

It is known that $\star$ is not associative in the case of skew braces, thus the same can be said at the level of cocommutative Hopf braces. Therefore, it matters whether we $\star$-multiply $H$ on the left or on the right. For any vector subspaces $X,Y$ of $H$, we define $X\star Y$ to be the Hopf subalgebra of $H^{\cdot}$ generated by the elements $x\star y$, for $x\in X$ and $y\in Y$. This is defined as the smallest Hopf subalgebra of $H^{\cdot}$ containing the vector space $\{x\star y\ |\ x\in X,y\in Y\}$.

\begin{remark}
    If $X,Y$ are subcoalgebras of $(H,\Delta,\varepsilon)$, from $\Delta(a\star b)=(a_{1}\star b_{1})\ot(a_{2}\star b_{2})$ we immediately have that $\{x\star y\ |\ x\in X,y\in Y\}$ is a subcoalgebra of $(H,\Delta,\varepsilon)$.
\end{remark}

We introduce the following definition:

\begin{definition}\label{central-series}
    Let $(H,\cdot,\bullet)$ be a cocommutative Hopf brace. The \textit{left series} of $(H,\cdot,\bullet)$ is defined by $H^{1}:=H$, $H^{n+1}:=H\star H^{n}$, for $n\geq1$, and the \textit{right series} of $(H,\cdot,\bullet)$ is defined by $H^{(1)}:=H$, $H^{(n+1)}:=H^{(n)}\star H$, for $n\geq1$.
\end{definition}

In analogy to the corresponding results for skew braces \cite[Propositions 2.1 and 2.2]{CSV}, we obtain:

\begin{proposition}
    For all $n\geq 1$, $H^{n}$ is a Hopf strong subbrace of $(H,\cdot,\bullet)$.
\end{proposition}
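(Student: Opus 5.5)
Induction on $n$. For $n=1$, $H^{1}=H$ is trivially a Hopf strong subbrace. Assume $H^{n}$ is a Hopf strong subbrace, in particular a Hopf subalgebra of $H^{\cdot}$ with $H\rightharpoonup H^{n}\subseteq H^{n}$. By definition $H^{n+1}=H\star H^{n}$ is a Hopf subalgebra of $H^{\cdot}$, so it remains to show $h\rightharpoonup z\in H^{n+1}$ for all $h\in H$ and $z\in H^{n+1}$.

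First reduce to generators. Because $\rightharpoonup$ makes $H^{\cdot}$ an $H^{\bullet}$-module algebra, with $h\rightharpoonup 1=\varepsilon(h)1$, $h\rightharpoonup(z\cdot z')=(h_{1}\rightharpoonup z)\cdot(h_{2}\rightharpoonup z')$ and $h\rightharpoonup S(z)=S(h\rightharpoonup z)$, the set $\{z\in H\mid h\rightharpoonup z\in H^{n+1}\ \text{for all}\ h\in H\}$ is a subalgebra of $H^{\cdot}$ closed under $S$. The Hopf subalgebra generated by the subcoalgebra $\{x\star y\mid x\in H,\ y\in H^{n}\}$ (a subcoalgebra by the Remark, since $H^{n}$ is a subcoalgebra) is the subalgebra generated by these elements and their images under $S$. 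So it suffices to check $h\rightharpoonup(x\star y)\in H^{n+1}$ for $x\in H$ and $y\in H^{n}$.

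For generators, use Lemma \ref{lem:propertiesasterisk}~3):
\[
h\rightharpoonup(x\star y)=(h_{1}\bullet x\bullet T(h_{2}))\star(h_{3}\rightharpoonup y).
\]
The first factor lies in $H$, and $h_{3}\rightharpoonup y\in H^{n}$ by the inductive hypothesis. So each summand has the form $x'\star y'$ with $x'\in H$ and $y'\in H^{n}$, hence lies in $H\star H^{n}=H^{n+1}$. Then $H^{n+1}$ is a Hopf strong subbrace, and by Remark \ref{rmk:Hopfsubbullet} also a Hopf subbrace.

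The main point needing care is the reduction to generators. One must make sure the Hopf subalgebra generated by a subcoalgebra is spanned by products of generators and their antipodes, so the module-algebra identities and $S$-compatibility of $\rightharpoonup$ propagate the closure. This holds here because the generating set is a subcoalgebra and $S$ is an anti-coalgebra map (indeed a coalgebra map by cocommutativity), so the subalgebra generated by it and its $S$-image is already a Hopf subalgebra. The rest is a direct application of Lemma \ref{lem:propertiesasterisk}~3).
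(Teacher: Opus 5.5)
Your proof is correct and follows the paper's argument: induction on $n$, applying Lemma~\ref{lem:propertiesasterisk}~3) to the generators $x\star y$, then extending to all of $H^{n+1}$ via the module-algebra identities for $\rightharpoonup$. Your reduction to generators spells out a step the paper only sketches. It uses closure of the test set under $S$, and the fact that the generators span a subcoalgebra, so the generated Hopf subalgebra is the subalgebra generated by them and their antipodes.
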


\begin{proof}
We only have to verify that $H\rightharpoonup H^{n}\subseteq H^{n}$, for all $n\geq1$. We prove this by induction. Of course, this is true for $n=1$. Suppose that $H\rightharpoonup H^{n}\subseteq H^{n}$, for an arbitrary $n$. For any $a,b\in H$ and $x\in H^{n}$, %we have $a\rightharpoonup x\in H^{n}$ and so, 
by 3) of Lemma \ref{lem:propertiesasterisk}, we have 
\[
    a\rightharpoonup(b\star x)=(a_{1}\bullet b\bullet T(a_{2}))\star(a_{3}\rightharpoonup x)\in H\star H^{n}=H^{n+1}. 
\]
Since $H^{n+1}$ is the Hopf subalgebra of $H^{\cdot}$ generated by elements of the form $b\star x$, with $b\in H$ and $x\in H^{n}$, and $a\rightharpoonup p\cdot q=(a_{1}\rightharpoonup p)\cdot(a_{2}\rightharpoonup q)$ for all $a,p,q\in H$, we obtain $H\rightharpoonup H^{n+1}\subseteq H^{n+1}$.
\end{proof}

\begin{proposition}\label{prop:A^(n) normal in A}
    For all $n\geq1$, $H^{(n)}$ is a normal Hopf subbrace of $(H,\cdot,\bullet)$.
\end{proposition}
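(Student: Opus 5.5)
Induction on $n$, using Corollary \ref{cor:equivalentnotionsnormal}. It suffices to show that $(H^{(n)})^{\cdot}$ is a normal Hopf subalgebra of $H^{\cdot}$, and that $a\star b\in H^{(n)}$ and $b\star a\in H^{(n)}$ for all $a\in H$, $b\in H^{(n)}$. The case $n=1$ is trivial. Assume $H^{(n)}$ is a normal Hopf subbrace. Throughout, $H^{(n+1)}=H^{(n)}\star H$ is generated, as a Hopf subalgebra of $H^{\cdot}$, by the elements $x\star a$ with $x\in H^{(n)}$, $a\in H$.

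\textbf{Step 1.} First show $H^{(n+1)}\subseteq H^{(n)}$. This follows from the inductive hypothesis and Corollary \ref{cor:equivalentnotionsnormal}: each generator satisfies $x\star a\in H^{(n)}$. Consequently $b\star a\in H^{(n)}\star H=H^{(n+1)}$ for $b\in H^{(n+1)}$, which is the defining closure on the right.

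\textbf{Step 2.} Next show closure under $\rightharpoonup$, and hence $a\star b\in H^{(n+1)}$ for $b\in H^{(n+1)}$ by Remark \ref{rmk:closureaction}. By 3) of Lemma \ref{lem:propertiesasterisk},
\[
a\rightharpoonup(x\star c)=(a_{1}\bullet x\bullet T(a_{2}))\star(a_{3}\rightharpoonup c).
\]
Here $a_1\bullet x\bullet T(a_2)\in H^{(n)}$ because $(H^{(n)})^{\bullet}$ is normal in $H^{\bullet}$ by the inductive hypothesis. So the right-hand side is again a generator of $H^{(n+1)}$. Since $a\rightharpoonup(p\cdot q)=(a_1\rightharpoonup p)\cdot(a_2\rightharpoonup q)$ and $a\rightharpoonup S(p)=S(a\rightharpoonup p)$, the action preserves the Hopf subalgebra generated by these elements, exactly as in the preceding proposition.

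\textbf{Step 3.} The remaining point, and the main obstacle, is normality of $(H^{(n+1)})^{\cdot}$ in $H^{\cdot}$: we need $a_1\cdot(x\star c)\cdot S(a_2)\in H^{(n+1)}$. For this I would use 1) of Lemma \ref{lem:propertiesasterisk} with $x$ replaced by elements of $H^{(n)}$, and with the roles arranged so that the conjugating element sits in the second argument. Concretely, from
\[
x\star(a\cdot c)=(x_1\star a_1)\cdot a_2\cdot(x_2\star c)\cdot S(a_3)
\]
we can solve for the conjugate:
\[
a_1\cdot(x\star c)\cdot S(a_2)=S(x_1\star a_1)\cdot\bigl(x_2\star(a_2\cdot c)\bigr).
\]
To justify this rearrangement, use cocommutativity to reorder Sweedler indices, and use that $\star$ is a coalgebra map (Remark \ref{rmk:closureaction}) to split $\Delta(x\star a)$. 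Both factors on the right-hand side are generators, or antipodes of generators, of $H^{(n+1)}$. Hence conjugates of generators lie in $H^{(n+1)}$.

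Since conjugation $h\mapsto a_1\cdot h\cdot S(a_2)$ is multiplicative on products in the cocommutative setting and commutes with $S$, the whole Hopf subalgebra $H^{(n+1)}$ is normal. Corollary \ref{cor:equivalentnotionsnormal} then completes the induction. The delicate part is carrying out the Sweedler bookkeeping in Step 3 carefully, since the index rearrangement there is where an error would most easily creep in.
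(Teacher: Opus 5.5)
Your proof is correct and follows the paper's argument essentially step by step. The inclusion $H^{(n+1)}\subseteq H^{(n)}$, the closure under $\rightharpoonup$ via 3) of Lemma \ref{lem:propertiesasterisk}, and the normality in $H^{\cdot}$ via $a_1\cdot(x\star c)\cdot S(a_2)=S(x_1\star a_1)\cdot(x_2\star(a_2\cdot c))$ are exactly the paper's steps. The only difference is that the paper checks normality in $H^{\bullet}$ directly with 4) of Lemma \ref{lem:propertiesasterisk}, while you cite the converse direction of Corollary \ref{cor:equivalentnotionsnormal}, whose proof is that same computation.
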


\begin{proof}
For all $n\geq1$, we have to verify that $H\rightharpoonup H^{(n)}\subseteq H^{(n)}$, from which it follows that $H^{(n)}$ is also a Hopf subalgebra of $H^{\bullet}$ (see Remark \ref{rmk:Hopfsubbullet}), and the fact that $H^{(n)}$ is normal in $H^{\cdot}$ and in $H^{\bullet}$. We prove these properties by induction. Of course, they are true for $n=1$. Suppose now these conditions hold for an arbitrary $n$. We first observe that, given $a\in H^{(n)}$ and $b\in H$, $a\star b\in H^{(n)}$
% \[
% \begin{split}
% a\star b&=S(a_{1})\cdot(a_{2}\bullet b_{1})\cdot S(b_{2})=S(a_{1})\cdot(b_{1}\bullet T(b_{2})\bullet a_{2}\bullet b_{3})\cdot S(b_{4})\\&=S(a_{1})\cdot b_{1}\cdot(b_{2}\rightharpoonup T(b_{3})\bullet a_{2}\bullet b_{4})\cdot S(b_{5})\in A^{(n)}
% \end{split}
% \]
since $H^{(n)}$ is a normal Hopf subbrace of $(H,\cdot,\bullet)$ (Corollary \ref{cor:equivalentnotionsnormal}). Therefore, $H^{(n+1)}\subseteq H^{(n)}$. 

\noindent 1). For any $a,b\in H$ and $x\in H^{(n)}$, since $H^{(n)}$ is normal in $H^{\bullet}$, we have $a_{1}\bullet x\bullet T(a_{2})\in H^{(n)}$, hence, by 3) of Lemma \ref{lem:propertiesasterisk}, we have
\[
a\rightharpoonup(x\star b)=(a_{1}\bullet x\bullet T(a_{2}))\star(a_{3}\rightharpoonup b)\in H^{(n)}\star H=H^{(n+1)}.
\]
Since $H^{(n+1)}$ is the Hopf subalgebra of $H^{\cdot}$ generated by elements of the form $x\star b$ with $x\in H^{(n)}$ and $b\in H$,  
%and $a\rightharpoonup p\cdot q=(a_{1}\rightharpoonup p)\cdot(a_{2}\rightharpoonup q)$ for all $a,p,q\in A$, 
we obtain $H\rightharpoonup H^{(n+1)}\subseteq H^{(n+1)}$. 

\noindent 2). For any $a,b\in H$ and $x\in H^{(n)}$, by 1) of Lemma \ref{lem:propertiesasterisk} we have
\[
\begin{split}
a_{1}\cdot(x\star b)\cdot S(a_{2})&=S(x_{1}\star a_{1})\cdot(x_{2}\star a_{2})\cdot a_{3}\cdot(x_{3}\star b)\cdot S(a_{4})\\&=S(x_{1}\star a_{1})\cdot(x_{2}\star (a_{2}\cdot b))\in H^{(n)}\star H=H^{(n+1)}, 
\end{split}
\]
%Since elements $x\star b$ with $x\in A^{(n)}$ and $b\in A$ generate $A^{(n+1)}$ with respect to $\cdot$, it 
hence $H^{(n+1)}$ is normal in $H^{\cdot}$.

\noindent 3). For any $a\in H$ and $y\in H^{(n+1)}$, since $H^{(n+1)}\subseteq H^{(n)}$, we have $y_{1}\cdot(y_{2}\star T(a))\in H^{(n+1)}\cdot (H^{(n)}\star H)\subseteq H^{(n+1)}$. Since $H\rightharpoonup H^{(n+1)}\subseteq H^{(n+1)}$ and $H^{(n+1)}$ is normal in $H^{\cdot}$, using 4) of Lemma \ref{lem:propertiesasterisk} we obtain
\[
a_{1}\bullet y\bullet T(a_{2})=a_{1}\cdot(a_{2}\rightharpoonup(y_{1}\cdot(y_{2}\star T(a_{3}))))\cdot S(a_{4})\in H^{(n+1)},
\]
that proves that $H^{(n+1)}$ is normal in $H^{\bullet}$.
%This proves that $A^{(n)}$ is a normal Hopf subbrace of $A$ for all $n\geq1$.
\end{proof}

Using the operation $\star$, one can also introduce the socle and the annihilator of a (cocommutative) Hopf brace, as it is done for skew braces. The corresponding definitions for skew braces first appeared in \cite[Definition 2.4]{GV} and in \cite[Definition 7]{CCS}, respectively. 
% \begin{remark}
%     We recall that, given a Hopf algebra $A$, the \textit{(left) adjoint action} is the left action of $A$ over itself defined by $a\rhd_{ad}b:=a_{1}bS(a_{2})$, for all $a,b\in A$. Hence, we have that $ab=ba$ if and only if $a_{1}bS(a_{2})=\varepsilon(a)b$, i.e.\ $a\rhd_{ad}b=\varepsilon(a)b$. The center of a group is then replaced, in the Hopf-theoretical setting, by the vector space of $A$ defined by $\mathcal{Z}(A):=\{a\in A\ |\ (-)\rhd_{ad}a=\id_{A}\}$. We observe that this is automatically a subcoalgebra of $A$, in case $A$ is cocommutative. In fact, given $a\in Z$, we have $(a\rhd_{ad}b_{1})\ot a_{2}=a_{1}b_{1}S(a_{2})\ot a_{3}=\varepsilon(a_{1})b\ot a_{2}$ and $a_{1}\ot(a_{2}\rhd_{ad}b)=a_{1}\ot a_{2}bS(a_{3})=a_{3}\ot a_{1}bS(a_{2})=a_{2}\ot\varepsilon(a_{1})b=a_{1}\ot\varepsilon(a_{2})b$. Moreover, $1\in Z$ and, given $b,c\in Z$, we have 
%     \[
% a\rhd_{ad}bc=a_{1}bcS(a_{2})=a_{1}bS(a_{2})a_{3}cS(a_{4})=ba_{1}cS(a_{2})=\varepsilon(a)bc,
%     \]
% so $Z$ is a subalgebra of $A$. Finally, given $b\in Z$, we have that 
% \[
% a\rhd_{ad}S(b)=a_{1}S(b)S(a_{2})=S^2(a_{1})S(b)S(a_{2})=S(a_{2}bS(a_{1}))=S(a_{1}bS(a_{2}))=S(\varepsilon(a)b)=\varepsilon(a)S(b)
% \]
% so it is a Hopf subalgebra of $A$.
% \end{remark}
\begin{remark}
    The notion of center for an arbitrary Hopf algebra $H$ was introduced in \cite{Andruskiewitsch}. If $H$ has bijective antipode (in particular, if $H$ is cocommutative or commutative), the center can be described as
\[
\mathcal{HZ}(H)=\{x\in H\ |\ \Delta(x)\in H\ot\mathcal{Z}(H)\}
\]
where $\mathcal{Z}(H)$ denotes the center of $H$ as an algebra, as proven in \cite[Theorem 2.2]{ChKa}. Observe that %,if $x\in\mathcal{HZ}(A)$, then $x=(\varepsilon\ot\Id)\Delta(x)\in \mathcal{Z}(A)$, i.e.\ 
$\mathcal{HZ}(H)\subseteq\mathcal{Z}(H)$. As proven in \cite[Proposition 4.3.2]{GrKaVe2}, if the base field is algebraically closed of characteristic 0, $\mathcal{HZ}(H)$ coincides with the categorical center of $H$ in the semi-abelian category of cocommutative Hopf algebras over such a field \cite{GrKaVe}. We also observe that, in the cocommutative case, one has $\Delta(x)=\tau\Delta(x)\in\mathcal{Z}(H)\ot H$, where $\tau$ is the ``flip map'', hence $\Delta(x)\in\mathcal{Z}(H)\ot\mathcal{Z}(H)$.
\end{remark}

\begin{remark}\label{rmk:equivalentcondsocle}
We observe that, given a Hopf brace $(H,\cdot,\bullet)$ and $a\in H$, we have $a\star b=\varepsilon(a)\varepsilon(b)1$ for all $b\in H$ if and only if $a\rightharpoonup b=\varepsilon(a)b$ for all $b\in H$. We also notice that, unlike the case of skew braces, the latter conditions are not equivalent to $a\cdot b=a\bullet b$ for all $b\in H$, even when $H$ is cocommutative.
\end{remark}

\begin{lemma}\label{lem:HZclosed}
    Let $(H,\cdot,\bullet)$ be a cocommutative Hopf brace. Then, $H\rightharpoonup\mathcal{HZ}(H^{\cdot})\subseteq\mathcal{HZ}(H^{\cdot})$. As a consequence, $\mathcal{HZ}(H^{\cdot})$ is a Hopf strong subbrace of $(H,\cdot,\bullet)$.
\end{lemma}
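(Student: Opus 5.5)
The strategy is to show, for $a\in H$ and $x\in\mathcal{HZ}(H^{\cdot})$, that $\Delta(a\rightharpoonup x)\in H\ot\mathcal{Z}(H^{\cdot})$. By the description of $\mathcal{HZ}$ recalled in the remark above, this is exactly what is needed. Since
\[
\Delta(a\rightharpoonup x)=(a_{1}\rightharpoonup x_{1})\ot(a_{2}\rightharpoonup x_{2})
\]
and $x_{1}\ot x_{2}\in H\ot\mathcal{Z}(H^{\cdot})$, it suffices to prove the following claim: $a\rightharpoonup z\in\mathcal{Z}(H^{\cdot})$ for every $a\in H$ and every $z\in\mathcal{Z}(H^{\cdot})$ that is a second tensor leg of $\Delta(x)$, with $x\in\mathcal{HZ}(H^{\cdot})$. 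Cocommutativity lets us assume $\Delta(x)\in\mathcal{Z}\ot\mathcal{Z}$, so both legs of any iterated coproduct of $x$ are central. I would therefore aim directly at showing $(a\rightharpoonup x)\cdot b=b\cdot(a\rightharpoonup x)$ for all $b$, whenever all legs of the iterated coproducts of $x$ are central. Applying this to the second legs then gives the coproduct condition.

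To prove commutation, I write an arbitrary $b\in H$ as $b=a_{1}\rightharpoonup(T(a_{2})\rightharpoonup b)$. This works because $\rightharpoonup$ is an action of $H^{\bullet}$ and $a_{1}\bullet T(a_{2})=\varepsilon(a)1$. I then use that $a\rightharpoonup(-)$ is multiplicative in the Hopf-algebra sense, $a\rightharpoonup(p\cdot q)=(a_{1}\rightharpoonup p)\cdot(a_{2}\rightharpoonup q)$. Setting $c=T(a_{3})\rightharpoonup b$, we have
\[
(a\rightharpoonup x)\cdot b=(a_{1}\rightharpoonup x)\cdot(a_{2}\rightharpoonup c)=a_{1}\rightharpoonup(x\cdot c).
\]
Here I need cocommutativity to reorder the Sweedler indices so that the same $a_{1},a_{2}$ split works; this is legitimate since $H$ is cocommutative. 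Since $x$ is central, $x\cdot c=c\cdot x$. Running the same computation backwards gives $a_{1}\rightharpoonup(c\cdot x)=b\cdot(a\rightharpoonup x)$. This step is careful but routine; note that it only uses centrality of $x$ itself.

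So in fact $H\rightharpoonup\mathcal{Z}(H^{\cdot})\subseteq\mathcal{Z}(H^{\cdot})$, and the coproduct formula then gives $H\rightharpoonup\mathcal{HZ}(H^{\cdot})\subseteq\mathcal{HZ}(H^{\cdot})$. For the consequence, I recall that $\mathcal{HZ}(H^{\cdot})$ is a Hopf subalgebra of $H^{\cdot}$; this is standard from \cite{Andruskiewitsch}, or it can be checked directly from the description via $\mathcal{Z}$ and cocommutativity. Together with the closure under $\rightharpoonup$ just proved, this is precisely the definition of a Hopf strong subbrace. By Remark \ref{rmk:Hopfsubbullet}, it is then also a Hopf subbrace.

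The main obstacle is the Sweedler bookkeeping in the commutation step: making sure the coproduct of $a$ distributes correctly, so that $b$ is absorbed inside a single $a_{1}\rightharpoonup(\cdot)$. Cocommutativity of $H$ is what makes this legitimate.
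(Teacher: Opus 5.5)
Your argument is correct and is essentially the paper's proof. The paper does exactly your computation: it rewrites $c$ as $a_{3}\rightharpoonup(T(a_{4})\rightharpoonup c)$, absorbs everything into a single $a_{2}\rightharpoonup(\cdot)$, commutes past the central leg $x_{2}$, and pulls back out using cocommutativity, while carrying the passive factor $a_{1}\rightharpoonup x_{1}$ along. So it implicitly proves your cleaner statement $H\rightharpoonup\mathcal{Z}(H^{\cdot})\subseteq\mathcal{Z}(H^{\cdot})$.

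One small bookkeeping correction: the forward step $(a\rightharpoonup x)\cdot b=a_{1}\rightharpoonup\big(x\cdot(T(a_{2})\rightharpoonup b)\big)$ needs only coassociativity. Cocommutativity is actually used in the backward step, to swap the legs of $a$ feeding $T(\cdot)\rightharpoonup b$ and $x$.
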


\begin{proof}
Given $x\in\mathcal{HZ}(H^{\cdot})$ and $a,c\in H$, we have that $\Delta(a\rightharpoonup x)\in H\ot\mathcal{Z}(H^{\cdot})$, since
\[
\begin{split}
    (a_{1}\rightharpoonup x_{1})\ot((a_{2}\rightharpoonup x_{2})\cdot c)&=(a_{1}\rightharpoonup x_{1})\ot\big((a_{2}\rightharpoonup x_{2})\cdot (a_{3}\rightharpoonup(T(a_{4})\rightharpoonup c))\big)\\&=(a_{1}\rightharpoonup x_{1})\ot\big(a_{2}\rightharpoonup x_{2}\cdot(T(a_{3})\rightharpoonup c)\big)\\&=(a_{1}\rightharpoonup x_{1})\ot(a_{2}\rightharpoonup (T(a_{3})\rightharpoonup c)\cdot x_{2})\\&=(a_{1}\rightharpoonup x_{1})\ot\big((a_{2}\rightharpoonup (T(a_{4})\rightharpoonup c))\cdot(a_{3}\rightharpoonup x_{2})\big)\\&=(a_{1}\rightharpoonup x_{1})\ot\big((a_{2}\rightharpoonup (T(a_{3})\rightharpoonup c))\cdot(a_{4}\rightharpoonup x_{2})\big)\\&=(a_{1}\rightharpoonup x_{1})\ot(c\cdot(a_{2}\rightharpoonup x_{2})).
\end{split}
\]
It follows that $a\rightharpoonup x\in \mathcal{HZ}(H^{\cdot})$. Since $\mathcal{HZ}(H^{\cdot})$ is a Hopf subalgebra of $H^{\cdot}$, we get that $\mathcal{HZ}(H^{\cdot})$ is a Hopf strong subbrace of $(H,\cdot,\bullet)$, as desired.
\end{proof}

Given a cocommutative Hopf brace $(H,\cdot,\bullet)$, we define the following vector spaces:
\begin{align}
    \mathrm{soc}(H)&=\{a\in \mathcal{HZ}(H^{\cdot})\ |\ a\star b=\varepsilon(a)\varepsilon(b)1 \text{, for all}\ b\in H\},\label{defsoc}\\
    \mathrm{ann}(H)&=\{a\in \mathcal{HZ}(H^{\cdot})\ |\ a\star b=b\star a=\varepsilon(a)\varepsilon(b)1 \text{, for all}\ b\in H\}.\label{defann}
\end{align}
Clearly, $\mathrm{ann}(H)\subseteq\mathrm{soc}(H)$. We prove the following result:

\begin{lemma}\label{lem:socannsubalg}
    Let $(H,\cdot,\bullet)$ be a cocommutative Hopf brace. Then:
\begin{itemize}
    \item[1)] $\mathrm{soc}(H)$ is a subalgebra of $H^{\bullet}$,
    \item[2)] given $x,y\in\mathrm{ann}(H)$ and $a\in H$, $a\star(x\cdot y)=\varepsilon(a)\varepsilon(xy)1$. 
\end{itemize}    

\end{lemma}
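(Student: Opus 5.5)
For 1), take $x,y\in\mathrm{soc}(H)$. We have to show two things: that $x\bullet y\in\mathcal{HZ}(H^{\cdot})$, and that $(x\bullet y)\star b=\varepsilon(x)\varepsilon(y)\varepsilon(b)1$ for every $b\in H$. The unit $1$ obviously lies in $\mathrm{soc}(H)$.

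\emph{Centrality.} By Remark \ref{rmk:equivalentcondsocle}, $x_{2}\rightharpoonup y=\varepsilon(x_{2})y$. Hence
\[
x\bullet y=x_{1}\cdot(x_{2}\rightharpoonup y)=x\cdot y .
\]
The same holds componentwise. Using Remark \ref{rmk:equivalentcondsocle} together with the fact that $\Delta(x)\in\mathrm{soc}(H)\otimes\mathrm{soc}(H)$-type elements behave well, we should have
\[
(x\bullet y)_{1}\otimes(x\bullet y)_{2}=x_{1}\cdot y_{1}\otimes x_{2}\cdot y_{2}.
\]
This lies in $H\otimes\mathcal{Z}(H^{\cdot})$ because $\mathcal{HZ}(H^{\cdot})$ is a subalgebra of $H^{\cdot}$. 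Indeed, we only need $x\bullet y=x\cdot y$, and $x\cdot y\in\mathcal{HZ}(H^{\cdot})$ since $\mathcal{HZ}(H^{\cdot})$ is a Hopf subalgebra. So no coproduct bookkeeping is required beyond the identity $x\bullet y=x\cdot y$.

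\emph{Vanishing of the star product.} Apply Lemma \ref{lem:propertiesasterisk} 2):
\[
(x\bullet y)\star b=(x_{1}\star(y_{1}\star b_{1}))\cdot(y_{2}\star b_{2})\cdot(x_{2}\star b_{3}).
\]
Since $y_{2}\otimes y_{1}$ sits in the appropriate space, the cleanest route is to first reduce each factor. Because $y\in\mathrm{soc}(H)$, we have $y_{1}\star b_{1}\otimes y_{2}\star b_{2}=\Delta(y\star b)=\varepsilon(y)\varepsilon(b)1\otimes 1$, using Remark \ref{rmk:closureaction}. The inner term then becomes $x_{1}\star 1=\varepsilon(x_{1})1$. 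Finally, $x\in\mathrm{soc}(H)$ kills the last factor. One subtlety needs checking here: that $x_{1}\otimes x_{2}$ can be treated with both legs in the socle. For this I would rather argue via Remark \ref{rmk:equivalentcondsocle}, working with $\rightharpoonup$. We have $(x\bullet y)\rightharpoonup b=x\rightharpoonup(y\rightharpoonup b)$ by the module property, and this equals $x\rightharpoonup\varepsilon(y)b=\varepsilon(x)\varepsilon(y)b$. This avoids any need for coproduct legs and is the approach I will actually use.

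For 2), take $x,y\in\mathrm{ann}(H)$ and apply Lemma \ref{lem:propertiesasterisk} 1):
\[
a\star(x\cdot y)=(a_{1}\star x_{1})\cdot x_{2}\cdot(a_{2}\star y)\cdot S(x_{3}).
\]
Here $a\star y=\varepsilon(a)\varepsilon(y)1$ since $y\in\mathrm{ann}(H)$. The right side therefore becomes $\varepsilon(y)(a\star x_{1})\cdot x_{2}S(x_{3})=\varepsilon(y)\,a\star x$. This equals $\varepsilon(a)\varepsilon(x)\varepsilon(y)1$ because $x\in\mathrm{ann}(H)$. Note that the terms $\varepsilon(a_{2})$ recombine with $a_{1}$ by counitality.

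The main obstacle is the coproduct-leg issue in part 1), namely knowing which legs of $\Delta(x)$ inherit the defining property. The formulation via $\rightharpoonup$ sidesteps this, since that condition is linear in $x$ and no leg of $\Delta(x)$ ever needs to be in $\mathrm{soc}(H)$. In part 2) only the single element $a\star y$ is specialized, so the same problem does not arise.
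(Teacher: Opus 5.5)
\textbf{There is a genuine gap in the centrality step of 1); everything else is fine.} Your part 2) is exactly the paper's computation. Your final argument for the $\star$-condition in 1) is also correct: $(x\bullet y)\rightharpoonup b=x\rightharpoonup(y\rightharpoonup b)=\varepsilon(x)\varepsilon(y)b$, followed by Remark \ref{rmk:equivalentcondsocle}. It is slightly shorter than the paper's route through 2) of Lemma \ref{lem:propertiesasterisk}. Your worry about legs is unfounded for that route anyway: after $\Delta(y\star b)=\varepsilon(y)\varepsilon(b)1\otimes 1$ one gets $(x_{1}\star 1)\cdot(x_{2}\star b)=x\star b$, so the socle condition is applied only to $x$ itself.

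\textbf{Where it fails.} The identity $x_{1}\cdot(x_{2}\rightharpoonup y)=x\cdot y$ applies the socle condition to the leg $x_{2}$. That is, it needs $x_{1}\otimes(x_{2}\rightharpoonup b)=x\otimes b$. This does not follow from $x\rightharpoonup b=\varepsilon(x)b$ for all $b$, because $\mathrm{soc}(H)$ is in general not a subcoalgebra. Your appeal to ``$\Delta(x)\in\mathrm{soc}(H)\otimes\mathrm{soc}(H)$'' has no basis. This is precisely why the paper introduces $\mathrm{Soc}(H)$ with the extra requirement $\Delta(a)\in\mathcal{HZ}(H^{\cdot})\otimes\mathrm{soc}(H)$, and derives $a\bullet b=a\cdot b$ only there (Remark \ref{rmk:propertiesSoc}).

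\textbf{Counterexample.} Let $G=\mathbb{Z}/4\times\mathbb{Z}/2$ with $(a,c)\circ(a',c')=(a+a'+2aa',c+c')$. This is a brace with $\lambda_{(a,c)}(a',c')=((1+2a)a',c')$. Take $H=\Bbbk G$ with group-like basis $e_{u}$, and $g=(1,0)$, $h=(1,1)$. Then $\lambda_{g}=\lambda_{h}\neq\mathrm{id}$, so $x=e_{g}-e_{h}$ satisfies $x\star b=0=\varepsilon(x)\varepsilon(b)1$ for all $b$. Also $x\in\mathcal{HZ}(H^{\cdot})=H$, since $H^{\cdot}$ is commutative. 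Hence $x\in\mathrm{soc}(H)$. Yet $x\bullet x=2(e_{(0,0)}-e_{(0,1)})$ while $x\cdot x=2(e_{(2,0)}-e_{(2,1)})$, and these differ when $\mathrm{char}\,\Bbbk\neq 2$.

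\textbf{The fix.} The conclusion $x\bullet y\in\mathcal{HZ}(H^{\cdot})$ is still true, but it needs an input you did not use. By Lemma \ref{lem:HZclosed}, $H\rightharpoonup\mathcal{HZ}(H^{\cdot})\subseteq\mathcal{HZ}(H^{\cdot})$, so $\mathcal{HZ}(H^{\cdot})$ is a Hopf strong subbrace and hence closed under $\bullet$ (Remark \ref{rmk:Hopfsubbullet}). Concretely, $x\bullet y=x_{1}\cdot(x_{2}\rightharpoonup y)$, where $\Delta(x)\in\mathcal{HZ}(H^{\cdot})\otimes\mathcal{HZ}(H^{\cdot})$ and $x_{2}\rightharpoonup y\in\mathcal{HZ}(H^{\cdot})$. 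With this replacement your proof is complete.
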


\begin{proof}
%Moreover, since $x_{1}\ot x_{2}\ot x_{3}\in \mathcal{HZ}(A^{\cdot})\ot\mathcal{HZ}(A^{\cdot})\ot\mathcal{HZ}(A^{\cdot})$ and $y_{1}\ot y_{2}\in \mathcal{HZ}(A^{\cdot})\ot\mathcal{HZ}(A^{\cdot})$, we have $\Delta(x\bullet y)=(x_{1}\bullet y_{1})\ot(x_{2}\bullet y_{2})=(x_{1}\bullet y_{1})\ot(x_{2}\cdot(x_{3}\rightharpoonup y_{2}))\in A\ot\mathcal{HZ}(A^{\cdot})$ using Lemma \ref{lem:HZclosed}. Hence $x\bullet y\in \mathcal{HZ}(A^{\cdot})$. 
%Hence, to obtain that $\mathrm{soc}(A)$ is a subalgebra of $A^{\cdot}$ and $A^{\bullet}$, it is enough to prove that $(x\bullet y)\star a=\varepsilon(xy)\varepsilon(a)1$, for all $a\in A$. 
1). Clearly, $1\in\mathrm{soc}(H)$. Given $x,y\in\mathrm{soc}(H)$, we know that $x\bullet y\in\mathcal{HZ}(H^{\cdot})$ by Lemma \ref{lem:HZclosed}. Moreover, since $(y_{1}\star a_{1})\ot(y_{2}\star a_{2})=\Delta(y\star a)=\varepsilon(y)\varepsilon(a)1\ot1$ for all $a\in H$, by 2) of Lemma \ref{lem:propertiesasterisk} we obtain
\[
(x\bullet y)\star a=(x_{1}\star(y_{1}\star a_{1}))\cdot(y_{2}\star a_{2})\cdot(x_{2}\star a_{3})=\varepsilon(y)(x_{1}\star1)\cdot(x_{2}\star a)=\varepsilon(y)x\star a=\varepsilon(xy)\varepsilon(a)1.
\]
2). Given $x,y\in\mathrm{ann}(H)$ and $a\in H$, by 1) of Lemma \ref{lem:propertiesasterisk}, we have 
\[
a\star(x\cdot y)=(a_{1}\star x_{1})\cdot x_{2}\cdot(a_{2}\star y)\cdot S(x_{3})=\varepsilon(y)(a\star x_{1})\cdot x_{2}\cdot S(x_{3})=\varepsilon(y)a\star x=\varepsilon(a)\varepsilon(xy)1. \qedhere
\]
\end{proof}

We define the \textit{socle} of a cocommutative Hopf brace $(H,\cdot,\bullet)$ by
\begin{equation}\label{defSocle}
    \mathrm{Soc}(H)=\{a\in \mathcal{HZ}(H^{\cdot})\ |\ \Delta(a)\in \mathcal{HZ}(H^{\cdot})\ot\mathrm{soc}(H)\} 
\end{equation}
and the \textit{annihilator} of $(H,\cdot,\bullet)$ by
\begin{equation}\label{defAnn}
    \mathrm{Ann}(H)=\{a\in \mathcal{HZ}(H^{\cdot})\ |\ \Delta(a)\in\mathcal{HZ}(H^{\cdot})\ot\mathrm{ann}(H)\}.
\end{equation}

\begin{remark}\label{rmk:propertiesSoc}
    %Given $a\in\mathrm{Soc}(H)$, we have $a=(\varepsilon_{A}\ot\id)\Delta(a)\in\mathrm{soc}(H)$, i.e.\ it satisfies $a\star b=\varepsilon(a)\varepsilon(b)1$ for all $b\in H$. Hence, 
    Clearly, $\mathrm{Soc}(H)\subseteq\mathrm{soc}(H)$. Given $a\in\mathrm{Soc}(H)$, we have 
    \[
    a\bullet b=a_{1}\cdot(a_{2}\rightharpoonup b)=a_{1}\cdot\varepsilon(a_{2})b=a\cdot b, \quad \text{for all } b\in H.
    \]
    Moreover, %we have $\Delta(a)=\tau\Delta(a)\in\mathrm{soc}(H)\ot\mathcal{HZ}(H^{\cdot})$, so 
    by cocommutativity, we have $\Delta(\mathrm{Soc}(H))\subseteq\mathrm{soc}(H)\ot\mathrm{soc}(H)$. Similarly, $\mathrm{Ann}(H)\subseteq\mathrm{ann}(H)$, and $\Delta(\mathrm{Ann}(H))\subseteq\mathrm{ann}(H)\ot\mathrm{ann}(H)$. Moreover, $\mathrm{Ann}(H)\subseteq\mathrm{Soc}(H)$.
\end{remark}

Finally, we prove that $\mathrm{Soc}(H)$ and $\mathrm{Ann}(H)$ are normal Hopf subbraces of $(H,\cdot,\bullet)$.

\begin{proposition}\label{NormalHopf}
Let $(H,\cdot,\bullet)$ be a cocommutative Hopf brace. Then, $\mathrm{Soc}(H)$ and $\mathrm{Ann}(H)$ are normal Hopf subbraces of $(H,\cdot,\bullet)$.
\end{proposition}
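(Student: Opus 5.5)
We prove the statement through Corollary \ref{cor:equivalentnotionsnormal}. For $B=\mathrm{Soc}(H)$ or $\mathrm{Ann}(H)$ it suffices to check four things: $B$ is a subcoalgebra, $B$ is a Hopf subalgebra of $H^{\cdot}$, $B^{\cdot}$ is normal in $H^{\cdot}$, and $a\star b,\ b\star a\in B$ for $a\in H$, $b\in B$. By Remark \ref{rmk:propertiesSoc}, every $b\in\mathrm{Soc}(H)$ satisfies $\Delta(b)\in\mathrm{soc}(H)\ot\mathrm{soc}(H)$. Hence $a\star b=\varepsilon(a)\varepsilon(b)1\in B$ for all $b\in\mathrm{Soc}(H)$, and for $b\in\mathrm{Ann}(H)$ also $b\star a=\varepsilon(a)\varepsilon(b)1\in B$. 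Normality in $H^{\cdot}$ is automatic, since $B\subseteq\mathcal{HZ}(H^{\cdot})\subseteq\mathcal{Z}(H^{\cdot})$ gives $a_1\cdot b\cdot S(a_2)=\varepsilon(a)b$.

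\textbf{The case of $\mathrm{Soc}(H)$.} The remaining condition $b\star a\in\mathrm{Soc}(H)$ is the real content. I would rewrite it as
\[
b\star a=S(b_1)\cdot(b_2\bullet a_1)\cdot S(a_2)=S(b_1)\cdot b_2\cdot a_1\cdot S(a_2)=\varepsilon(b)\varepsilon(a)1,
\]
using Remark \ref{rmk:propertiesSoc} ($b\bullet a=b\cdot a$). So $b\star a$ is trivially in $B$, and in fact $\mathrm{Soc}(H)\subseteq\mathrm{ann}$-type behaviour holds. Everything therefore reduces to showing that $\mathrm{Soc}(H)$ is a Hopf subalgebra of $H^{\cdot}$.

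\textbf{Subcoalgebra.} Write $Z=\mathcal{HZ}(H^{\cdot})$, which is a subcoalgebra. For $a\in\mathrm{Soc}(H)$ we need $\Delta(a_1)\ot a_2\in Z\ot\mathrm{Soc}(H)$, i.e.\ $a_1\ot a_2\ot a_3\in Z\ot Z\ot\mathrm{soc}(H)$. I would get this from coassociativity together with $\Delta(a)\in Z\ot\mathrm{soc}(H)$ and the fact that $\mathrm{soc}(H)$ is closed under the coproduct in the appropriate sense. The latter holds because, for $c\in\mathrm{soc}(H)$,
\[
(c_1\star b_1)\ot(c_2\star b_2)=\varepsilon(c)\varepsilon(b)1\ot1 .
\]
Then a standard linear-algebra argument applies: for subspaces $V,W$, $(V\ot H)\cap(H\ot W)=V\ot W$. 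This shows $\mathrm{Soc}(H)$ is the largest subcoalgebra contained in $\mathrm{soc}(H)\cap Z$.

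\textbf{Closure under $\cdot$ and $S$.} Closure under $\cdot$ follows from $x\cdot y=x\bullet y$ for $x\in\mathrm{Soc}(H)$ and Lemma \ref{lem:socannsubalg} 1): $\mathrm{soc}(H)$ is a $\bullet$-subalgebra and $Z$ is a $\cdot$-subalgebra. So $x_1\cdot y_1\ot x_2\bullet y_2\in Z\ot\mathrm{soc}(H)$, and this equals $\Delta(x\cdot y)$. For the antipode I would show $S(x)=T(x)$ on $\mathrm{Soc}(H)$ (since $\bullet$ and $\cdot$ agree on the left by such elements), and then prove $S(x)\in\mathrm{soc}(H)$ via 2) of Lemma \ref{lem:propertiesasterisk}: $\varepsilon(x)\varepsilon(a)1=(T(x_1)\bullet x_2)\star a$ expands to $(T(x_1)\star a)$ times trivial factors. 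The hard step is verifying these subcoalgebra and antipode closures cleanly, which I expect to be the main obstacle.

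\textbf{The case of $\mathrm{Ann}(H)$.} The same steps apply, now using Lemma \ref{lem:socannsubalg} 2) for $a\star(x\cdot y)$. For $(x\cdot y)\star a$ we use $x\cdot y=x\bullet y$ together with Lemma \ref{lem:propertiesasterisk} 2). For $a\star S(x)$ we apply 1) of Lemma \ref{lem:propertiesasterisk} to $x_1\cdot S(x_2)$.
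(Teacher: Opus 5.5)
There is a genuine gap, in the step you treat as immediate. By \eqref{defsoc}, membership in $\mathrm{soc}(H)$ makes an element trivial in the \emph{left} slot of $\star$. So $\Delta(b)\in\mathrm{soc}(H)\ot\mathrm{soc}(H)$ gives $b\star a=\varepsilon(b)\varepsilon(a)1$, which is why your ``real content'' computation is a one-liner. It says nothing about $a\star b$. Your claim that $a\star b=\varepsilon(a)\varepsilon(b)1$ for $a\in H$, $b\in\mathrm{Soc}(H)$ is false in general: it would give $\mathrm{Soc}(H)\subseteq\mathrm{ann}(H)$, hence $\mathrm{Soc}(H)=\mathrm{Ann}(H)$. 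For a counterexample, take $G=\mathbb{Z}/3\times\mathbb{Z}/2$ with $\cdot$ the direct product and $(n,h)\circ(n',h')=(n+(-1)^{h}n',h+h')$. This is a skew brace, and in the cocommutative Hopf brace $\Bbbk G$ one computes $(n,h)\star(n',h')=(((-1)^{h}-1)n',0)$. Hence $\mathrm{Soc}(\Bbbk G)=\Bbbk[\mathbb{Z}/3\times 0]$ while $\mathrm{Ann}(\Bbbk G)=\Bbbk1$, and $(0,1)\star(1,0)=(1,0)$ is not the unit $(0,0)$.

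So the condition $a\star x\in\mathrm{Soc}(H)$ for $a\in H$, $x\in\mathrm{Soc}(H)$ is the actual heart of the matter; by Remark \ref{rmk:closureaction} it is equivalent to $H\rightharpoonup\mathrm{Soc}(H)\subseteq\mathrm{Soc}(H)$. Your proposal gives no argument for it. The missing idea is to use 4) of Lemma \ref{lem:propertiesasterisk}. For $x\in\mathrm{Soc}(H)$ the factor $x_{2}\star T(a_{3})$ is trivial, and $a_{2}\rightharpoonup x$ is central by Lemma \ref{lem:HZclosed}. Hence $a_{1}\bullet x\bullet T(a_{2})=a_{1}\cdot(a_{2}\rightharpoonup x)\cdot S(a_{3})=a\rightharpoonup x$. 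It follows that $(a\rightharpoonup x)\rightharpoonup b=a_{1}\rightharpoonup(x\rightharpoonup(T(a_{2})\rightharpoonup b))=\varepsilon(a)\varepsilon(x)b$. Since $a\rightharpoonup x$ also lies in $\mathcal{HZ}(H^{\cdot})$ by Lemma \ref{lem:HZclosed}, we get $a\rightharpoonup x\in\mathrm{soc}(H)$. Applying this to $\Delta(a\rightharpoonup x)=(a_{1}\rightharpoonup x_{1})\ot(a_{2}\rightharpoonup x_{2})$, with $\mathrm{Soc}(H)$ a subcoalgebra, gives $a\rightharpoonup x\in\mathrm{Soc}(H)$, and then $a\star x=(a\rightharpoonup x_{1})\cdot S(x_{2})\in\mathrm{Soc}(H)$. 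This is the paper's key computation, which it also uses to obtain normality in $H^{\bullet}$ directly.

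With this step inserted, your reduction via Corollary \ref{cor:equivalentnotionsnormal} goes through. The remaining steps are fine: closure under $\cdot$, the antipode argument ($S=T$ on $\mathrm{Soc}(H)$ together with 2) of Lemma \ref{lem:propertiesasterisk}), and the $\mathrm{Ann}(H)$ case, where both star products really are trivial. One minor correction: the identity $(c_{1}\star b_{1})\ot(c_{2}\star b_{2})=\varepsilon(c)\varepsilon(b)1\ot1$ does not make $\mathrm{soc}(H)$ a subcoalgebra, and in general it is not one. You do not need it, since $a_{1}\ot a_{2}\ot a_{3}\in Z\ot Z\ot\mathrm{soc}(H)$ follows from coassociativity because $Z$ is a subcoalgebra.
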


\begin{proof}
Clearly, $1\in\mathrm{Ann}(H)\subseteq\mathrm{Soc}(H)$. Given $x,y\in\mathrm{Soc}(H)$, we have $x\cdot y=x\bullet y$, as observed in Remark \ref{rmk:propertiesSoc}. Then, by Lemma \ref{lem:HZclosed} and 1) of Lemma \ref{lem:socannsubalg}, we have $\Delta(x\bullet y)=(x_{1}\bullet y_{1})\ot(x_{2}\bullet y_{2})\in\mathcal{HZ}(H^{\cdot})\ot\mathrm{soc}(H)$. Thus, $\mathrm{Soc}(H)$ is a subalgebra of $H^{\cdot}$ and of $H^{\bullet}$. If $x,y\in\mathrm{Ann}(H)$, we also have $\Delta(x\cdot y)=(x_{1}\cdot y_{1})\ot(x_{2}\cdot y_{2})\in\mathcal{HZ}(H^{\cdot})\ot\mathrm{ann}(H)$ by 2) of Lemma \ref{lem:socannsubalg}. It then follows that also $\mathrm{Ann}(H)$ is a subalgebra of $H^{\cdot}$ and $H^{\bullet}$.
%Given $a\in\mathrm{Soc}(A)$, we know that $\Delta(a)\in\mathcal{HZ}(A^{\cdot})\ot\mathcal{HZ}(A^{\cdot})$ since $\mathcal{HZ}(A^{\cdot})$ is a subcoalgebra of $A^{\cdot}$. 
Moreover, given $x\in\mathrm{Soc}(H)$, we have
\[
\begin{split}
    (\Id\ot\Delta)\Delta(x)=(\Delta\ot\Id)\Delta(x)\in\mathcal{HZ}(H^{\cdot})\ot\mathcal{HZ}(H^{\cdot})\ot\mathrm{soc}(H),
\end{split}
\]
hence $\Delta(x)\in\mathrm{soc}(H)\ot\mathrm{Soc}(H)$. By cocommutativity, we get $\Delta(x)\in\mathrm{Soc}(H)\ot\mathrm{Soc}(H)$, i.e.\ $\mathrm{Soc}(H)$ is a subcoalgebra of $H$. Analogously, $\mathrm{Ann}(H)$ is a subcoalgebra of $H$. Given $x\in\mathrm{Soc}(H)$ and $a\in H$, by 4) of Lemma \ref{lem:propertiesasterisk} and Lemma \ref{lem:HZclosed}, we have 
\[
\begin{split}
a_{1}\bullet x\bullet T(a_{2})&=a_{1}\cdot(a_{2}\rightharpoonup(x_{1}\cdot(x_{2}\star T(a_{3}))))\cdot S(a_{4})=a_{1}\cdot(a_{2}\rightharpoonup(x_{1}\cdot\varepsilon(x_{2})\varepsilon T(a_{3})1))\cdot S(a_{4})\\&=a_{1}\cdot(a_{2}\rightharpoonup x)\cdot S(a_{3})=a_{1}\cdot S(a_{3})\cdot(a_{2}\rightharpoonup x)=a_{1}\cdot S(a_{2})\cdot(a_{3}\rightharpoonup x)=a\rightharpoonup x,
\end{split}
\]
hence, given $b\in H$, we get
\[
\begin{split}
    (a\rightharpoonup x)\rightharpoonup b=(a_{1}\bullet x\bullet T(a_{2}))\rightharpoonup b&=a_{1}\rightharpoonup(x\rightharpoonup(T(a_{2})\rightharpoonup b))=\varepsilon(x)a_{1}\rightharpoonup(T(a_{2})\rightharpoonup b)%\\&=\varepsilon(x)a_{1}\bullet T(a_{2})\rightharpoonup b
=\varepsilon(x)\varepsilon(a)b,
\end{split}
\]
i.e.\ $a\rightharpoonup x=a_{1}\bullet x\bullet T(a_{2})\in\mathrm{soc}(H)$, for any $x\in\mathrm{Soc}(H)$ and $a\in H$. Hence, since $\mathrm{Soc}(H)$ is a subcoalgebra of $H$, we have %$(a_{1}\rightharpoonup x_{1})\ot(a_{2}\rightharpoonup x_{2})=(a_{1}\rightharpoonup x_{1})\ot(a_{2}\bullet x_{2}\bullet T(a_{3}))$, and we get 
$(a_{1}\rightharpoonup x_{1})\ot((a_{2}\rightharpoonup x_{2})\rightharpoonup b)%=(a_{1}\rightharpoonup x_{1})\ot(a_{2}\bullet x_{2}\bullet T(a_{3})\rightharpoonup b)
=(a_{1}\rightharpoonup x_{1})\ot\varepsilon(a_{2}\rightharpoonup x_{2})b$, for any $b\in H$, i.e.\ $a\rightharpoonup x\in\mathrm{Soc}(H)$. If $x\in\mathrm{Ann}(H)$ and $a\in H$, since $\mathrm{Ann}(H)$ is a subcoalgebra of $H$, we get $a\rightharpoonup x=(a\star x_{1})\cdot x_{2}=\varepsilon(a)x$ and then $\Delta(a\rightharpoonup x)=\varepsilon(a)x_{1}\ot x_{2}\in\mathrm{Ann}(H)\ot\mathrm{Ann}(H)$. Given $x\in\mathrm{Soc}(H)$ and $b\in H$, we get
\begin{align*}
T(x)&=S(x_{1})\cdot x_{2}\cdot T(x_{3})=S(x_{1})\cdot(x_{2}\bullet T(x_{3}))=x_{1}\rightharpoonup T(x_{2})=S(x),\\
T(x)\rightharpoonup b&=T(x_{1})\rightharpoonup\varepsilon(x_{2})b=T(x_{1})\rightharpoonup(x_{2}\rightharpoonup b)=T(x_{1})\bullet x_{2}\rightharpoonup b=\varepsilon(x)b,
\end{align*}
hence $\mathrm{Soc}(H)$ is a Hopf strong subbrace of $(H,\cdot,\bullet)$. The same can be deduced for $\mathrm{Ann}(H)$. We have already proven that $\mathrm{Soc}(H)$ and $\mathrm{Ann}(H)$ are normal Hopf subalgebras of $H^{\bullet}$ and, since $\mathrm{Ann}(H)\subseteq\mathrm{Soc}(H)\subseteq\mathcal{Z}(H^{\cdot})$, they are also normal Hopf subalgebras of $H^{\cdot}$.
\end{proof}

We leave for future work the study of socle and annihilator series for cocommutative Hopf braces, as it is done in \cite{CSV} and \cite{BP}  for skew braces, respectively.

\begin{section}{The Birkhoff subcategory of cocommutative Hopf algebras}\label{sec: relative commutator}

We now want to study central extensions (in the sense of \cite{JK}) with respect to the adjunction \eqref{Adj HBr Hopf}. The left adjoint $F\colon \HBrc\to\Hopfc$ has been described in \cite[Remark 8.9]{GranSciandra} where $\Hopfc$ is proven to be a \textit{Birkhoff subcategory} of $\HBrc$, namely a full (replete) reflective subcategory that is closed under subobjects and regular quotients.
% \begin{equation}
% \begin{tikzcd}
% \HBrc &&& \Hopfc
% 	\arrow[" "{name=1, anchor=center, inner sep=0}, curve={height=-8pt}, from=1-1, to=1-4,"F"]
% 	\arrow[" "{name=1, anchor=center, inner sep=0}, curve={height=-8pt}, from=1-4, to=1-1, "G"]
%     \arrow["\vdash"{anchor=center, rotate=90}, draw=none, from=1-4, to=1-1].
% \end{tikzcd}
% \end{equation}
% \begin{remark}\label{Birkhoff}
%     It was shown in \cite{GranSciandra} that 
% \end{remark}
% \begin{figure}[H]
%     \centering
%     \begin{tikzpicture}
%         \node(0) at (-1.5,0) {$\HBrc$};
%         \node(1) at (1.5,0) {$\Hopfc$};
%         \node(2) at (0,0) {$\perp$};
%         \draw[->] (0) to[out=20,in=160] node[above]{$F$} (1);
%         \draw[->] (1) to[out=200,in=-20] node[below]{$G$} (0);
        
%     \end{tikzpicture}
%     \caption{Adjunction between cocommutative Hopf braces and cocommutative Hopf algebras}
% \end{figure}

\begin{definition}\label{def: commutator hopfc}
    Let $(H,\cdot,\bullet)$ be a cocommutative Hopf brace and $(I,\cdot,\bullet)$ a normal Hopf subbrace of $(H,\cdot,\bullet)$. We define $[I,H]_{\Hopfc}$ as the Hopf subalgebra of $H^{\cdot}$ generated by 
    $$\{i\star h,\; h\star i,\; k_1\cdot (h\star i)\cdot S(k_2)\mid i\in I\; \; h,k\in H\}.$$
\end{definition}

\begin{proposition}\label{prop:relativecommnormal}
    %For any $I\subseteq H$ normal Hopf subbrace, 
    The Hopf subalgebra $[I,H]_\Hopfc$ of $H^{\cdot}$ is a normal Hopf subbrace of $(H,\cdot,\bullet)$.
\end{proposition}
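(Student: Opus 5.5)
Write $K=[I,H]_{\Hopfc}$. By Corollary \ref{cor:equivalentnotionsnormal}, it suffices to prove three things: $K^{\cdot}$ is a normal Hopf subalgebra of $H^{\cdot}$, and $a\star k\in K$ and $k\star a\in K$ for all $a\in H$, $k\in K$. Equivalently, by Remark \ref{rmk:closureaction} and the argument of that corollary, we may check $H\rightharpoonup K\subseteq K$, normality in $H^{\cdot}$, and $K\star H\subseteq K$. First note that $K\subseteq I$, since $I$ is normal: $i\star h, h\star i\in I$ by Corollary \ref{cor:equivalentnotionsnormal}, and $I^{\cdot}$ is normal. Also, the generating set $X$ is a subcoalgebra because $\star$ is a coalgebra map (Remark \ref{rmk:closureaction}) and $I,H$ are subcoalgebras. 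Hence $K$ is simply the subalgebra generated by $X\cup S(X)$. Each closure property therefore only needs to be checked on generators, with products handled by multiplicativity.

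\textbf{Normality in $H^{\cdot}$.} The adjoint action $a_1\cdot(-)\cdot S(a_2)$ is multiplicative and commutes with $S$ by cocommutativity, so it is enough to treat generators. Applying it to $k_1\cdot(h\star i)\cdot S(k_2)$ gives an element of the same form. For $i\star h$, use 1) of Lemma \ref{lem:propertiesasterisk} as in the proof of Proposition \ref{prop:A^(n) normal in A}:
\[
a_1\cdot(i\star h)\cdot S(a_2)=S(i_1\star a_1)\cdot(i_2\star(a_2\cdot h)),
\]
which lies in $K$. The conjugates of $h\star i$ were put in the generating set exactly for this purpose.

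\textbf{Closure under $\rightharpoonup$.} The map $a\rightharpoonup-$ is multiplicative with $a\rightharpoonup S(b)=S(a\rightharpoonup b)$, so again only generators matter. By 3) of Lemma \ref{lem:propertiesasterisk},
\[
a\rightharpoonup(x\star y)=(a_1\bullet x\bullet T(a_2))\star(a_3\rightharpoonup y).
\]
Normality of $I$ in $H^{\bullet}$ and $H\rightharpoonup I\subseteq I$ then keep $i\star h$ and $h\star i$ of the correct form. For the conjugates, use $a\rightharpoonup(k_1\cdot z\cdot S(k_2))=(a_1\rightharpoonup k_1)\cdot(a_2\rightharpoonup z)\cdot S(a_3\rightharpoonup k_2)$. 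This is a conjugate of a generator $h'\star i'$ by the element $a\rightharpoonup k$, using $\Delta(a\rightharpoonup k)=(a_1\rightharpoonup k_1)\ot(a_2\rightharpoonup k_2)$ and cocommutativity to rearrange the Sweedler indices. It therefore lies in $K$.

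\textbf{The main obstacle: $k\star a\in K$ for all $k\in K$.} The difficulty is that $\star$ is not multiplicative in its left variable. I would use 2) of Lemma \ref{lem:propertiesasterisk} together with $x\cdot y=x_1\bullet(T(x_2)\rightharpoonup y)$. This expresses $(x\cdot y)\star a$ through $x_1\star(\cdots)$, $(T(x_2)\rightharpoonup y)\star a$ and $x\star a$. By the previous step, $T(x_2)\rightharpoonup y\in K$, which allows an induction on word length in the generators. The inner term $x_1\star(z\star a)$ is handled with the same induction, after noting $z\star a\in H$. For antipodes, I would use $S(x)=x_1\rightharpoonup T(x_2)$ and the $\bullet$-inverse formula coming from 2) with $y=T(x)$.

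It then remains to check $g\star a\in K$ for generators $g$. If $g=i\star h$ or $g=h\star i$, then $g\in I$ and $g\star a$ is itself a generator of type $i'\star a$. If $g$ is a conjugate, then $g\in I$ still, so again $g\star a$ is a generator. Thus the base case is easy precisely because $K\subseteq I$, and the real work is the word-length induction through 2). If that induction becomes unmanageable, I would instead use 4) of Lemma \ref{lem:propertiesasterisk} to reduce the claim to showing $K^{\bullet}$ is normal in $H^{\bullet}$, and verify this on generators.
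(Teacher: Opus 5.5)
Your argument is correct, but it is longer than it needs to be, and the step you call the main obstacle is not actually an obstacle. You already prove $K\subseteq I$. This means that \emph{every} $k\in K$, not only a generator, is an element of $I$. So for every $a\in H$, the element $k\star a$ is literally a generator of type $i\star h$, and $a\star k$ is a generator of type $h\star i$. Hence $K\star H\subseteq K$ and $H\star K\subseteq K$ hold immediately. Through Corollary \ref{cor:equivalentnotionsnormal}, the whole proposition then reduces to normality of $K^{\cdot}$ in $H^{\cdot}$, which you verify exactly as the paper does, via 1) of Lemma \ref{lem:propertiesasterisk}.

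Your generator-by-generator check of $H\rightharpoonup K\subseteq K$ via 3), and your word-length induction through 2), are valid. The span of $X\cup S(X)$ is a subcoalgebra stable under $H\rightharpoonup -$, so Sweedler components of a word of length $m$ stay among words of length $m$, and the induction closes. These steps are, however, redundant.

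The paper checks the three conditions of Definition \ref{def:normal} directly and uses $K\subseteq I$ in just this way. It gets $h\rightharpoonup b=(h\star b_{1})\cdot b_{2}$, where $h\star b_{1}$ is a generator because $b_{1}\in K\subseteq I$. It gets $\bullet$-normality from 4) of Lemma \ref{lem:propertiesasterisk}, where $b_{2}\star T(h_{3})$ is a generator for the same reason. In effect, the paper re-derives inline the ``if'' direction of Corollary \ref{cor:equivalentnotionsnormal}. Your reduction through that corollary, once the observation above is made, gives the shortest proof: only the $\cdot$-normality computation remains.
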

\begin{proof}
Since $I$ is a normal Hopf subbrace, by Corollary \ref{cor:equivalentnotionsnormal} we know that $[I,H]_\Hopfc\subseteq I$. Then, for any $h\in H$ and $b\in [I,H]_\Hopfc$, %$(h\rightharpoonup b_1)\cdot S(b_2)=h\star b\in [I,H]_\Hopfc$, which implies 
$h\rightharpoonup b=%(h\rightharpoonup b_1)\cdot S(b_2)\cdot b_3=
(h\star b_{1})\cdot b_{2}\in [I,H]_\Hopfc$.
    Hence, %$H\rightharpoonup[I,H]_\Hopfc\subseteq [I,H]_\Hopfc$ and so
 by Remark \ref{rmk:Hopfsubbullet}, we have that $[I,H]_\Hopfc$ is a Hopf subbrace of $(H,\cdot,\bullet)$. It remains to prove that $h_1\cdot b\cdot S(h_2)\in [I,H]_\Hopfc$ and $h_1\bullet b\bullet T(h_2)\in [I,H]_\Hopfc$ for any $h\in H$ and $b\in [I,H]_\Hopfc$. 
    % The first statement immediately follows from the definition of $[I,H]_\Hopfc$ and from the fact that 
    % $$l_1\cdot k_1\cdot (h\star i)\cdot S(k_2)\cdot S(l_2)=(l\cdot k)_1\cdot (h\star i)\cdot S((l\cdot k)_2)$$
    % for any $l,k,h\in H$ and $i\in I$. 
    The elements $h\star i$, with $h\in H$ and $i\in I$, are closed under the adjoint action of $H^{\cdot}$ by definition of $[I,H]_\Hopfc$. Moreover, by 1) of Lemma \ref{lem:propertiesasterisk}, %we have that $ (i_{1}\star x_{1})\cdot x_{2}\cdot(i_{2}\star y)\cdot S(x_{3})=i\star(x\cdot y)\in[I,H]_\Hopfc$, for any $i\in I$ and $x,y\in H$. Therefore, 
    we have
\[
x_{1}\cdot(i\star h)\cdot S(x_{2})=S(i_{1}\star x_{1})\cdot(i_{2}\star x_{2})\cdot x_{3}\cdot(i_{3}\star h)\cdot S(x_{4})=S(i_{1}\star x_{1})\cdot(i_{2}\star(x_{2}\cdot h))\in[I,H]_\Hopfc,
\]
%for any $x,y\in H$, $i\in I$.
%and hence $[I,H]_\Hopfc$ is a normal Hopf subalgebra of $H^{\cdot}$.
for any $x\in H$. Finally, by $4)$ of Lemma \ref{lem:propertiesasterisk}, we have
\[
    h_1\bullet b\bullet T(h_2)= h_1\cdot (h_2\rightharpoonup(b_1\cdot(b_2\star T(h_3))))\cdot S(h_4)\in [I,H]_\Hopfc,
\]
for any $h\in H$ and $b\in[I,H]_\Hopfc$, since $H\rightharpoonup[I,H]_\Hopfc\subseteq [I,H]_\Hopfc$ and $[I,H]_\Hopfc$ is a normal Hopf subalgebra of $H^{\cdot}$. 
\end{proof}

\begin{proposition}\label{centralExt}
    A regular epimorphism $f\colon (A,\cdot,\bullet)\to(B,\cdot,\bullet)$ in $\HBrc$ is a central extension with respect to the adjunction \eqref{Adj HBr Hopf} if and only if $[\Hker (f),A]_\Hopfc=\Bbbk1_{A}$, where $\mathrm{Hker}(f)=\{x\in A\ |\ x_{1}\ot f(x_{2})=x\ot1_{B}\}$ is the kernel of $f$ in $\HBrc$. 
\end{proposition}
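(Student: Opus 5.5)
We would use the standard characterization of central extensions relative to a Birkhoff subcategory of a semi-abelian category. Since $\Hopfc$ is a Birkhoff subcategory of the semi-abelian category $\HBrc$, the Janelidze--Kelly theory applies. A regular epimorphism $f\colon A\to B$ is central if and only if the two projections $\pi_1,\pi_2\colon \Eq(f)\to A$ of its kernel pair are trivial extensions. This in turn is equivalent to the restriction of the unit $\eta_A\colon A\to GF(A)$ to $\Hker(f)$ being injective on the relevant "commutator". Concretely, this is the condition $\Hker(\eta_{\Eq(f)})\cap \Hker(\pi_1)=\Bbbk 1$, i.e.\ $[\Eq(f)]_{\Hopfc}\cap \Hker(\pi_1)$ trivial. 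Here $[X]_{\Hopfc}=X\star X$ (made normal), the Hopf kernel of the reflection $X\to X/X\cdot(X\star X)^+$ described in \cite[Remark 8.9]{GranSciandra}.

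\textbf{Step 1: describe $\Eq(f)$.} In $\HBrc$, the kernel pair $\Eq(f)$ can be described as the subbrace $A\times_B A\subseteq A\ot A$ with componentwise operations. It is isomorphic to the semidirect-type object generated by $\Hker(f)\ot 1$ and the diagonal $\{a_1\ot a_2\}$. Its kernel $\Hker(\pi_1)$ is $1\ot \Hker(f)$.

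\textbf{Step 2: compute the intersection.} We compute the Hopf subalgebra of $\Eq(f)^{\cdot}$ generated by the star products of $\Eq(f)$ and their conjugates, using that $\star$ on $A\ot A$ is computed componentwise. It is generated by elements of two kinds:
\begin{itemize}
\item $(a_1\star a'_1)\ot(a_2\star a'_2)$, coming from diagonal elements;
\item mixed terms such as $(1\ot i)\star(a_1\ot a_2)=\varepsilon(a)\,1\ot(i\star a)$ and $(a_1\ot a_2)\star(1\ot i)=1\ot (a\star i)$, together with their conjugates $k_1\cdot(h\star i)\cdot S(k_2)$ under the diagonal.
\end{itemize}
Intersecting with $1\ot\Hker(f)$, we aim to show that the intersection is exactly $1\ot[\Hker(f),A]_{\Hopfc}$. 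The inclusion "$\supseteq$" is immediate from the mixed generators; Definition~\ref{def: commutator hopfc} was designed precisely to include the conjugated terms $k_1\cdot(h\star i)\cdot S(k_2)$, which arise from normal closure.

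For "$\subseteq$", we pass to the quotient $A\to A/A\cdot[\Hker(f),A]^+_{\Hopfc}$, which is legitimate by Proposition~\ref{prop:relativecommnormal}. There $\Hker(f)$ becomes "$\star$-central", meaning that $i\star h$ and $h\star i$ are trivial. We then check that in $\Eq(f)$ every generator of the commutator lies, modulo this ideal, in the diagonal image. Hence its intersection with $1\ot\Hker(f)$ is trivial.

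\textbf{Step 3: conclude.} Centrality is then equivalent to $[\Hker(f),A]_{\Hopfc}=\Bbbk1_A$. Alternatively, one can phrase the argument through the known characterization \cite{JK} that $f$ is central iff $\pi_1$ is trivial, i.e.\ the square formed by $\pi_1$, $f$ and its $F$-image is a pullback. For cocommutative Hopf algebras this reduces to a kernel comparison, by the equivalence between normal Hopf subalgebras and quotients.

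The main obstacle is Step 2's "$\subseteq$" direction: controlling the Hopf subalgebra generated inside $A\ot A$, where elements are not simple tensors and one must work with Hopf kernels rather than elements. We would first try to work with the isomorphism $\Eq(f)\cong \Hker(f)\rtimes A$ in order to split each generator into a diagonal part times a $1\ot\Hker(f)$ part. The conjugation formula in the proof of Proposition~\ref{prop:relativecommnormal} and Lemma~\ref{lem:propertiesasterisk} 1)–2) then express the $\star$-products of split elements as products of such terms.
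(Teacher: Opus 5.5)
Your reduction is sound. By Janelidze--Kelly, $f$ is central iff the projection $\pi_1\colon\Eq(f)\to A$ is a trivial extension. This holds iff $R(\Eq(f))\cap\Hker(\pi_1)=\Bbbk1$, where $R(X)=X\star X$ and $\Hker(\pi_1)=1\ot\Hker(f)$. Since $(\pi_1,\pi_2)$ is jointly monic and split by the diagonal $\delta$, this is equivalent to the Bourn--Gran condition $R(s)=R(t)$ used in the paper. Your ``$\supseteq$'' inclusion shows that $1\ot(i\star a)$, $1\ot(a\star i)$ and their conjugates by $\delta(A)$ lie in $R(\Eq(f))\cap(1\ot\Hker(f))$. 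This gives the ``only if'' direction essentially as in the paper, which uses $(h\ot1)\star(a_1\ot a_2)=(h\star a)\ot1$. (Small slip: $(1\ot i)\star(a_1\ot a_2)=1\ot(i\star a)$, with no factor $\varepsilon(a)$.)

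The gap is the ``if'' direction. It is the only substantial part of the statement, and you leave it at ``we then check'' and flag it yourself as the main obstacle. Carrying out your own splitting plan shows what is missing. Write $X=(1\ot i)\bullet\delta(a)$ and $Y=(1\ot j)\cdot\delta(c)$ with $i,j\in\Hker(f)$. Lemma 2) together with the hypothesis kills the factors $(1\ot i)\star(-)$. Lemma 1) together with $\delta(a)\star(1\ot j)=1\ot(a\star j)=\varepsilon(a)\varepsilon(j)\,1\ot1$ then leaves
\[
X\star Y=\varepsilon(i)\,(1\ot j_1)\cdot\delta(a\star c)\cdot(1\ot S(j_2))=\varepsilon(i)\,(a_1\star c_1)\ot j_1\cdot(a_2\star c_2)\cdot S(j_2).
\]
This lies in $\delta(A)$ only if $j_1\cdot(a\star c)\cdot S(j_2)=\varepsilon(j)\,a\star c$, i.e.\ only if $\Hker(f)$ centralizes $A\star A$ in $A^{\cdot}$. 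That is a statement about the adjoint $\cdot$-action, and it is not among the hypotheses. Your proposal neither states nor proves it, and neither the ``$\star$-centrality'' of $\Hker(f)$ nor the semidirect decomposition gives it for free.

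It is true, and Lemma 1) plus normality proves it. On one hand, $a\star(k\cdot y)=k_1\cdot(a\star y)\cdot S(k_2)$, because $a\star k$ is trivial. On the other hand, $k\cdot y=y_1\cdot(S(y_2)\cdot k\cdot y_3)$ with $S(y_2)\cdot k\cdot y_3\in\Hker(f)$, so Lemma 1) also gives $a\star(k\cdot y)=\varepsilon(k)\,a\star y$. With this identity, $X\star Y=\varepsilon(i)\varepsilon(j)\,\delta(a\star c)$. Hence $R(\Eq(f))=\delta(A\star A)$ and $R(\Eq(f))\cap(1\ot\Hker(f))\subseteq\delta(A)\cap(1\ot A)=\Bbbk1$. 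This completes your route, and the passage to a quotient becomes unnecessary.

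The paper fills the same hole differently. From the hypothesis it derives $x\cdot h=x\bullet h$ and $h\cdot x=h\bullet x$ for $h\in\Hker(f)$. It also derives $S(a)\cdot b\in\Hker(f)$ and $S(a)\cdot b=T(a)\bullet b$ for $a\ot b\in\Eq(f)$. It then computes directly $(a\star c)\cdot S(b\star d)=\varepsilon(a)\varepsilon(b)\varepsilon(c)\varepsilon(d)1$, which yields $R(s)=R(t)$.
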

\begin{proof}
Let us denote by $(\Eq (f),s,t)$ the kernel pair of $f$ in $\HBrc$ and by $R(X)$ the kernel in $\HBrc$ of the $X$-component $\eta_X\colon X\to GF(X)$ of the unit $\eta$ of the adjunction \eqref{Adj HBr Hopf}, namely
    \[
    \Eq (f)=\{a\otimes c\in A\otimes A\mid a_1\otimes f(a_2)\otimes c=a\otimes f(c_1)\otimes c_2\},
    \]
    see e.g.\ \cite[page 275]{GranSciandra}, and $R(X)=X\star X$ for any $(X,\cdot,\bullet)$ in $\HBrc$. The assignment $R$ defines a functor and the extension $f$ is central if and only if $R(s)=R(t)$ (see \cite{BournGran}).
% \[\begin{tikzcd}
% 	R(\mathrm{Eq}(f)) & R(A) & \\
% 	\mathrm{Eq}(f) & A & B
% 	\arrow[shift left, from=1-1, to=1-2,"R(s)"]
% 	\arrow[shift right, from=1-1, to=1-2,"R(t)"']
% 	\arrow[tail, from=1-1, to=2-1]
% 	\arrow[tail, from=1-2, to=2-2]
% 	\arrow[shift left, from=2-1, to=2-2,"s"]
% 	\arrow[shift right, from=2-1, to=2-2,"t"']
% 	\arrow[from=2-2, to=2-3,"f"]
% \end{tikzcd}\]
Firstly, we assume that the regular epimorphism $f$ is a central extension and we prove that $[\Hker (f), A]_\Hopfc$ is trivial. Let $h\in \Hker (f)$ and $a\in A$. Since $h\in \Hker (f)$, we have $h_1\otimes f(h_2)\otimes 1=h\otimes 1\otimes 1=h\otimes f(1)\otimes 1$,
    therefore $h\otimes 1\in\Eq(f)$.
    Furthermore, $a_1\otimes a_2\in \Eq (f)$ as well, hence we have
    \begin{align*}
       R(\Eq (f))\ni(h\otimes 1)\star_{A\ot A} (a_1\otimes a_2)=& (S(h_1)\cdot (h_2\bullet a_1)\cdot S(a_2))\otimes (1\cdot (1\bullet a_3)\cdot S(a_4))=(h\star a)\otimes 1.
    \end{align*}
    Using the assumption that $R(s)=R(t)$, we obtain
    $$h\star a= R(s)((h\star a)\otimes 1)=R(t)((h\star a)\otimes 1)=\varepsilon(h)\varepsilon(a)1.$$
    In the same way, one shows that $a\star h=\varepsilon(h)\varepsilon(a)1$, hence $[\Hker (f),A]_\Hopfc$ is trivial.

    Conversely, we now assume that $[\Hker (f), A]_\Hopfc=\Bbbk1$, and we show that $R(s)=R(t)$. First of all, %we notice that for any $h\in \Hker (f)$ and any $x\in A$, $[\Hker (f), A]_\Hopfc=\Bbbk1$ implies that $S(x_1)\cdot(x_2\bullet h_1)\cdot S(h_2)= x \star h=\varepsilon(x)\varepsilon(h)1$. As a consequence, 
    since $x \star h=\varepsilon(x)\varepsilon(h)1$ for all $x\in A$ and $h\in \Hker (f)$, we have \begin{equation}\label{eq cdot=bullet}
        x\cdot h=x_1\cdot\varepsilon(x_2)\varepsilon(h_1)1\cdot h_2=x_{1}\cdot(x_{2}\star h_{1})\cdot h_{2}=x_1\cdot S(x_2)\cdot(x_3\bullet h_1)\cdot S(h_2)\cdot h_3=x\bullet h.
    \end{equation}
Similarly, $h\cdot x=h\bullet x$ for all $x\in A$ and $h\in \Hker(f)$.
Now, given $a\otimes b,c\otimes d\in \Eq (f)$, we need to prove $R(s)((a\otimes b)\star_{A\ot A} (c\otimes d))=R(t)((a\otimes b)\star_{A\ot A} (c\otimes d))$, i.e.\ $R(s)((a\star c)\ot (b\star d))=R(t)((a\star c)\ot (b\star d))$. Since $(\mathrm{Id}\ot\Delta)(\mathrm{Eq}(f))\subseteq\mathrm{Eq}(f)\ot A$, it suffices to show that $(a\star c)\cdot S(b\star d)=\varepsilon(a)\varepsilon(b)\varepsilon(c)\varepsilon(d)1$, for $a\otimes b  \in \Eq (f),c\otimes d\in \Eq (f)$. %where equality (!) relies on the fact that $a\otimes b\in \Eq (f)$, thus $a_1\otimes f(a_2)\otimes b=a\otimes f(b_1)\otimes b_2$ which implies 
Since %$a_1\otimes a_2\otimes f(a_3)\otimes b=a_1\otimes a_2\otimes f(b_1)\otimes b_2$, 
$a_{1}\ot f(a_{2})\ot b_{1}\ot b_{2}\overset{(!)}{=}a\ot f(b_{1})\ot b_{2}\ot b_{3}$, we get 
\begin{align*}
        (S(a_1)\cdot b_1)\otimes f(S(a_2)\cdot b_2)&= 
        (S(a_1)\cdot b_1)\otimes(Sf(a_2)\cdot f(b_2))
        \overset{(!)}{=} (S(a)\cdot b_2)\otimes (Sf(b_1)\cdot f(b_{3}))\\&=(S(a)\cdot b)\ot1,
    \end{align*}   
hence
\begin{equation}\label{eq Sa.b in Hker}
        S(a)\cdot b\in \Hker (f), \qquad \text{for any } a\ot b\in\mathrm{Eq}(f).
    \end{equation}
Similarly, $a\cdot S(b)\in\mathrm{Hker}(f)$, for any $a\ot b\in\mathrm{Eq}(f)$. Since $(\Delta\ot\mathrm{Id})(\mathrm{Eq}(f))\subseteq A\ot\mathrm{Eq}(f)$, as a consequence of \eqref{eq cdot=bullet} and \eqref{eq Sa.b in Hker} we obtain 
\begin{equation}\label{eq Sa.b=Ta bullet b}
S(a)\cdot b=T(a_{1})\bullet a_{2}\bullet (S(a_3)\cdot b)=T(a_{1})\bullet( a_2\cdot S(a_3)\cdot b)=T(a)\bullet b,\quad \text{for any } a\ot b\in\mathrm{Eq}(f).
\end{equation}
Similarly, $a\cdot S(b)=a\bullet T(b)$, for any $a\ot b\in\mathrm{Eq}(f)$. Given $a\otimes b,c\otimes d\in \Eq (f)$, we can now compute the following:
    \begin{align*}
        (a\star c)\cdot S(b\star d)&= S(a_1)\cdot(a_2\bullet c_1)\cdot S(c_2)\cdot S(S(b_1)\cdot(b_2\bullet d_1)\cdot S(d_2))\\
        &=S(a_1)\cdot(a_2\bullet c_1)\cdot S(c_2)\cdot d_2\cdot S(b_2\bullet d_1)\cdot b_1\\&\overset{\eqref{eq cdot=bullet},\eqref{eq Sa.b in Hker}}{=}S(a_1)\cdot(a_2\bullet c_1)\bullet( S(c_2)\cdot d_2)\cdot S(b_2\bullet d_1)\cdot b_1\\
        &\overset{\eqref{eq Sa.b=Ta bullet b}}{=}S(a_1)\cdot (a_2\bullet c_1\bullet T(c_2)\bullet d_2)\cdot S(b_2\bullet d_1)\cdot b_1\\
        &=\varepsilon(c)S(a_1)\cdot (a_2\bullet d_2)\cdot S(b_2\bullet d_1)\cdot b_1\\
        &=\varepsilon(c)S(a_1)\cdot (a_2\bullet T(b_1)\bullet b_2\bullet d_2)\cdot S(b_3\bullet d_1)\cdot b_4\\
        &%\overset{\eqref{eq cdot=bullet}}{=}
=\varepsilon(c)S(a_1)\cdot (a_2\bullet T(b_1))\cdot( b_2\bullet d_2)\cdot S(b_3\bullet d_1)\cdot b_4\\
        &=\varepsilon(c)\varepsilon(d) S(a_1)\cdot (a_2\bullet T(b_1))\cdot b_2\\&=
        %\overset{\eqref{eq cdot=bullet}}{=}
\varepsilon(c)\varepsilon(d)S(a_1)\cdot (a_2\bullet T(b_1)\bullet b_2)\\
        &=\varepsilon(b)\varepsilon(c)\varepsilon(d) S(a_1)\cdot a_2=\varepsilon(a)\varepsilon(b)\varepsilon(c)\varepsilon(d)1.
    \end{align*}
    This concludes the proof.
\end{proof}

\begin{remark}\label{Izquierdo}
    In \cite{Izquierdo}, Pérez-Izquierdo investigated the process of passing from terms in a variety of universal algebras to what he called ``linearized terms'', that are homomorphisms in the category $\mathsf{Coalg}_{\mathsf{coc}}$ of cocommutative coalgebras. Under this ``linearizing process'', equalities in the category of skew braces translate into equalities in the category of %cocommutative coalgebras/
    cocommutative Hopf braces. Some of the computations in this article involve these linearized terms, and the resulting identities can be interpreted as the linearization of the corresponding equations in the category of skew braces. For instance, the equations in Lemma \ref{lem:propertiesasterisk} are the linearized version of the identities presented in \cite[Lemma 2.7]{Tsang}. Therefore, the validity of Lemma \ref{lem:propertiesasterisk}---which we proved in a direct way for the sake of completeness---can be also deduced from \cite[Theorem 13]{Izquierdo}. Indeed, in this theorem the author shows that, whenever an equation holds in a variety of universal algebras of type $\mathcal{F}$, then its ``linearized version'' holds for $\mathcal{F}$-coalgebras, namely cocommutative coalgebras equipped with an $\mathcal{F}$-algebra structure which is compatible with the coalgebra structure. 

    Nevertheless, not all computations can be straightforwardly deduced in this way, but just those identities that are equations involving arbitrary elements of a cocommutative Hopf brace. The last computation of Proposition \ref{centralExt}, for instance, holds just for suitable $a,b,c, d\in A$: it is not a linearized equation in the sense of Pérez-Izquierdo, because the corresponding ``sentence'' in the category of skew brace should involve an ``implication''.
\end{remark}

An application of Pérez-Izquierdo approach is that any class of $\mathcal F$-coalgebras described by linearized equations can be interpreted as a category of coalgebraic models for a suitable algebraic theory (see \cite{Bev} for more details). We apply this observation in the following. For any fixed $n\in {\mathbb N}^*$, we denote by $\mathsf{Nil}^{(n)} $ the full subcategory of $\HBrc$ whose objects are the right $n$-nilpotent cocommutative Hopf braces, i.e.\ the cocommutative Hopf braces $(A,\cdot,\bullet)$ such that $[A^{(n)},A]_{\Hopfc}= \Bbbk1_{A} $. We are now going to show that $\mathsf{Nil}^{(n)} $ is a Birkhoff subcategory of the semi-abelian category $\HBrc$ (one could also deduce this result from \cite[Proposition $7.8$]{TT} together with Proposition \ref{centralExt}):
\begin{proposition}\label{Birkhoff}
 For any $n\in {\mathbb N}^*$,  $\mathsf{Nil}^{(n)} $ is a Birkhoff subcategory of $\HBrc$.
\end{proposition}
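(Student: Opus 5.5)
The plan is to use the Pérez-Izquierdo approach recalled in Remark \ref{Izquierdo}: show that $\mathsf{Nil}^{(n)}$ is the class of cocommutative Hopf braces satisfying a family of linearized equations. By \cite{Bev}, such a class is a category of coalgebraic models of an algebraic theory, and it is closed in $\HBrc$ under subobjects, regular quotients and products. In a semi-abelian category, a full replete subcategory closed under subobjects, regular quotients and limits is reflective, hence Birkhoff. Alternatively, one can construct the reflector directly: $A\mapsto A/A\cdot([A^{(n)},A]_{\Hopfc})^+$, which is legitimate because $[A^{(n)},A]_{\Hopfc}$ is a normal Hopf subbrace (Propositions \ref{prop:A^(n) normal in A} and \ref{prop:relativecommnormal}).

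First step: unfold the condition. The commutator $[A^{(n)},A]_{\Hopfc}$ is generated by $i\star h$, $h\star i$ and conjugates of $h\star i$, with $i\in A^{(n)}$. It is trivial if and only if $i\star h=h\star i=\varepsilon(i)\varepsilon(h)1$ for all such $i,h$, since conjugates of trivial elements are trivial. The subalgebra $A^{(n)}$ is generated as a Hopf subalgebra of $A^{\cdot}$ by iterated products $(\cdots((x_1\star x_2)\star x_3)\cdots)\star x_n$. I will show by induction on $n$ that it suffices to impose the identities
\[
t_{n+1}(x_1,\dots,x_{n+1})\star y=\varepsilon(x_1\cdots x_{n+1})\varepsilon(y)1,\qquad y\star t_{n+1}(x_1,\dots,x_{n+1})=\varepsilon(x_1\cdots x_{n+1})\varepsilon(y)1,
\]
where $t_1(x)=x$ and $t_{k+1}=t_k\star x_{k+1}$.

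The key step, and the main obstacle, is passing from generators to the whole Hopf subalgebra $A^{(n)}$. One must check that the set $\{i\in A^{(n)} \mid i\star h=h\star i=\varepsilon\varepsilon 1 \ \forall h\}$, intersected with a subcoalgebra condition, is closed under $\cdot$ and $S$. For $h\star(i\cdot j)$ this follows from 1) of Lemma \ref{lem:propertiesasterisk}. For $(i\cdot j)\star h$, I would use $i\cdot j=i_1\bullet(T(i_2)\rightharpoonup j)$ together with 2) of Lemma \ref{lem:propertiesasterisk}. Hence I first need that, under the hypothesis, $T(i_2)\rightharpoonup j=\varepsilon(i_2)j$, which holds because $i\star j$-type identities make the action trivial, as in Remark \ref{rmk:equivalentcondsocle}. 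Antipodes are handled as in Proposition \ref{NormalHopf}: $T(x)=S(x)$ and the action of $S(x)$ is trivial. This mirrors Lemma \ref{lem:socannsubalg} and Proposition \ref{NormalHopf}, which treat the case $n=1$ with $\mathrm{ann}$ replaced by elements annihilating $A$. Coalgebra issues are handled using cocommutativity and the fact that $\star$ is a coalgebra map (Remark \ref{rmk:closureaction}). The induction is needed because the generators of $A^{(n)}$ involve elements of $A^{(n-1)}$, not merely of the form $t_{n}$. However, $\star$ on the left argument interacts with products through 2) of Lemma \ref{lem:propertiesasterisk}, reducing $(p\cdot q)\star y$ to expressions of type $t$.

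Once $\mathsf{Nil}^{(n)}$ is described by these linearized equations, the closure properties are immediate. A subbrace or quotient of a brace satisfying the identities satisfies them, because morphisms preserve $\star$, $\varepsilon$ and $1$. Products, in $\HBrc$ the tensor product $A\otimes B$ with componentwise structure, also satisfy them, because $\star_{A\otimes B}$ is computed componentwise. This yields the Birkhoff property.
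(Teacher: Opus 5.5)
Your main line is the paper's: describe $\mathsf{Nil}^{(n)}$ by linearized equations in the sense of P\'erez-Izquierdo and invoke \cite{Bev}. What you add is useful. The paper passes silently from ``generators of $A^{(n)}$'' to ``triviality of $[A^{(n)},A]_{\Hopfc}$''. Your argument fills that step correctly: the largest subcoalgebra of $\{i\in A \mid i\star h=h\star i=\varepsilon(i)\varepsilon(h)1 \text{ for all } h\in A\}$ is a Hopf subalgebra of $A^{\cdot}$, using 1) and 2) of Lemma \ref{lem:propertiesasterisk} and the facts $i\cdot j=i\bullet j$ and $T=S$ on it.

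Two points need repair. First, there is an index slip. With $t_1(x)=x$, the Hopf subalgebra $A^{(n)}$ is generated by the $t_n$, so the identities must read $t_n(x_1,\dots,x_n)\star y=y\star t_n(x_1,\dots,x_n)=\varepsilon(x_1)\cdots\varepsilon(x_n)\varepsilon(y)1$. With $t_{n+1}$ you describe $\mathsf{Nil}^{(n+1)}$.

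Second, your induction showing that the left-normed $t_n$ generate $A^{(n)}$ does not close. Writing $p\cdot q=p_1\bullet(T(p_2)\rightharpoonup q)$ and using 2) leaves a factor $(T(p)\rightharpoonup q)\star y$. By 3) one gets $T(p)\rightharpoonup(t_m\star x)=(T(p_1)\bullet t_m\bullet p_2)\star(T(p_3)\rightharpoonup x)$. Its first slot is a $\bullet$-conjugate of $t_m$, which is no longer of type $t$ once $m\geq 2$. The paper simply quotes this generation fact. You can bypass it by imposing $\tau(x)\star y=y\star\tau(x)=\varepsilon(\tau(x))\varepsilon(y)1$ for all recursively defined terms $\tau$: level $k+1$ consists of products and antipodes of $\sigma\star y$ with $\sigma$ of level $k$. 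The values of level-$n$ terms span exactly $A^{(n)}$, since all these terms are coalgebra maps, and the resulting conditions are still linearized equations.

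Your justification of reflectivity is also too quick. Closure under subobjects, regular quotients and limits does not by itself yield a left adjoint in an arbitrary semi-abelian category; one needs completeness and a size argument. Moreover, you only checked binary products $A\otimes B$, whereas infinite products in $\HBrc$ are not tensor products. Either take the left adjoint from \cite[Proposition 6.3]{Bev}, as the paper does, or use your ``alternative'' reflector, which in fact gives a complete and more elementary proof:
\begin{itemize}
\item Since morphisms preserve $\star$, any $g\colon A\to B$ in $\HBrc$ satisfies $g(A^{(n)})\subseteq B^{(n)}$ and $g([A^{(n)},A]_{\Hopfc})\subseteq[B^{(n)},B]_{\Hopfc}$. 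Equality holds when $g$ is surjective, because the generating sets span subcoalgebras, so the image of a generated Hopf subalgebra is generated by the image.
\item Hence $Q=A/A\cdot[A^{(n)},A]_{\Hopfc}^+$ satisfies $[Q^{(n)},Q]_{\Hopfc}=\pi([A^{(n)},A]_{\Hopfc})=\Bbbk1_Q$.
\item Every morphism from $A$ into an object of $\mathsf{Nil}^{(n)}$ kills $[A^{(n)},A]_{\Hopfc}^+$, so it factors uniquely through $\pi$.
\item Closure under Hopf subbraces and surjective images follows from the same inclusions.
\end{itemize}
This route needs neither the linearization machinery nor any description of generators of $A^{(n)}$. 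What the paper's route buys in exchange is the interpretation of $\mathsf{Nil}^{(n)}$ as a category of coalgebraic models of an algebraic theory.
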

\begin{proof}

We recall that $A^{(n)}$ is generated by elements of the form $(\cdots((a^1\star a^2)\star a^3)\star\cdots )\star a^n$ for $a^1,\dots, a^n\in A$, therefore we explicitly know a set of generators for $[A^{(n)},A]_\Hopfc$ (see Definition \ref{def: commutator hopfc}). These generators are ``linearized terms'' in the sense of \cite{Izquierdo}, thus the subcategory $\mathsf{Nil}^{(n)}$ can be described using the linearized equations stating that these terms are trivial. 
Hence, the subcategory $\mathsf{Nil}^{(n)}$ can be interpreted as a category of coalgebraic models for a suitable algebraic theory, and the inclusion functor $\mathsf{Nil}^{(n)}\to \HBrc$ is an algebraic functor in the sense of Lawvere \cite{Lawvere}, which is induced by a surjective map of theories. For this reason, the thesis immediately  follows from \cite[Proposition 6.3]{Bev}.
\end{proof}

\section{Weak $\mathcal E$-universal central extensions and homology}\label{section: Weak univ centr ext}

We are now going to prove the existence of a \emph{weak $\mathcal E$-universal central extension} of cocommutative Hopf braces following the approach based on \emph{cleft extensions} developed in \cite{GranSciandra-2} for cocommutative Hopf algebras.
\begin{definition}
A surjective morphism $f \colon (A, \cdot, \bullet) \rightarrow (B, \cdot, \bullet)$ of cocommutative Hopf braces is called a \emph{cleft extension} if there exists a morphism $s \colon B \rightarrow A$ of cocommutative coalgebras such that $fs = \mathrm{Id}_B$.
\end{definition}
 The class $\mathcal E$ of cleft extensions of cocommutative Hopf braces satisfies the following properties, that are proved exactly as in the case of cocommutative Hopf algebras (see \cite[Lemma 3.6]{GranSciandra-2}):
\begin{itemize}
    \item[i)] $\mathcal{E}$ contains the isomorphisms in $\HBrc$,
    \item[ii)] $\mathcal{E}$ is pullback stable, 
    \item[iii)] $\mathcal{E}$ is closed under composition,
    \item[iv)] if $g f$ is in $\mathcal{E}$, then $g$ is also in $\mathcal{E}$. 
\end{itemize} 
Thanks to the fact that the category $\HBrc$ is monadic on the category $\mathsf{Coalg}_{\mathsf{coc}}$ \cite[Theorem 3.1]{Agore} we know that, for any cocommutative Hopf brace 
$(B, \cdot, \bullet)$, there exists a regular epimorphism $\epsilon_B  \colon (TV(B), \cdot, \bullet) \rightarrow (B, \cdot, \bullet)$, where $V$ and $T$ are the forgetful and the free functor, respectively, in the adjunction
\begin{equation}\label{adj}
\begin{tikzcd}
\mathsf{Coalg}_{\mathsf{coc}} &&& \HBrc
	\arrow[" "{name=1, anchor=center, inner sep=0}, curve={height=-8pt}, from=1-1, to=1-4,"T"]
	\arrow[" "{name=1, anchor=center, inner sep=0}, curve={height=-8pt}, from=1-4, to=1-1, "V"]
    \arrow["\vdash"{anchor=center, rotate=90}, draw=none, from=1-4, to=1-1].
\end{tikzcd}
\end{equation}
Indeed, the triangular identity
$V(\epsilon_B)\eta_{V( B)} =\Id_{V(B)}$, where $\eta_{V(B)}$ is the $V(B)$-component of the unit $\eta$ of the adjunction \eqref{adj}, shows that $B$-component $\epsilon_B$ of the counit $\epsilon$ is split as a morphism of cocommutative coalgebras. This implies that $\epsilon_B %\colon TV(B) \rightarrow B
$ is a regular epimorphism in $\HBrc$ (since it is surjective, see \cite[Corollary 4.8]{GranSciandra}), and it belongs to the class $\mathcal E$ of cleft extensions.
The cleft extension $\epsilon_B  \colon (TV(B), \cdot, \bullet) \rightarrow (B, \cdot, \bullet)$ is $\mathcal E$-projective, in the following sense: given any other morphim $g \colon (E,\cdot,\bullet) \rightarrow (B,\cdot,\bullet)$ in $\mathcal E$, there exists a morphism $\phi \colon (TV(B),\cdot,\bullet) \rightarrow (E,\cdot,\bullet)$ in $\HBrc$ such that $g \phi = \epsilon_B$.
%\[\begin{tikzcd}
%	& E \\
%	TV(B) & B.
%	\arrow[from=1-2, to=2-2, "g"]
%	\arrow[dashed, from=2-1, %to=1-2, "\phi"]
%	\arrow[from=2-1, to=2-2, %"\varepsilon_B"']
% \end{tikzcd}\]
The $\mathcal E$-projectivity of $TV(B)$ is a consequence of the following result, whose proof is similar to the one of \cite[Lemma 3.5]{GranSciandra-2}.

\begin{lemma}\label{lem:projectiveobjects}
For any $C$ in $\mathsf{Coalg}_{\mathsf{coc}}$, the free Hopf brace $T(C)$ is $\mathcal{E}$-projective.
    %i.e. for any morphism $\phi:F(C)\to N$ in $\mathsf{Hopf_{coc}}$ and any morphism $p:M\to N$ in $\mathcal{E}$, there exists a morphism $\psi:F(C)\to M$ in $\mathsf{Hopf_{coc}}$ such that $p\psi=\phi$:
%\[\begin{tikzcd}
	%& M \\
	%F(C) & N
	%\arrow[from=1-2, to=2-2, "p"]
	%\arrow[dashed, from=2-1, to=1-2, "\psi"]
	%\arrow[from=2-1, to=2-2, "\phi"']
%\end{tikzcd}\]
\end{lemma}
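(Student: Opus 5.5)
We will argue directly from the free--forgetful adjunction $T\dashv V$ in \eqref{adj}, as in \cite[Lemma 3.5]{GranSciandra-2}. Fix a cocommutative coalgebra $C$, a morphism $\phi\colon T(C)\to N$ in $\HBrc$, and a cleft extension $p\colon M\to N$ in $\mathcal E$. By definition there is a morphism of cocommutative coalgebras $s\colon N\to M$ with $ps=\Id_N$. We must produce a morphism of Hopf braces $\psi\colon T(C)\to M$ with $p\psi=\phi$. Applied to $C=V(B)$, $N=B$ and $\phi=\epsilon_B$, this gives the $\mathcal E$-projectivity of $TV(B)$ used above.

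The first step is to pass to the coalgebra level. Let $\eta_C\colon C\to VT(C)$ be the unit of \eqref{adj}. The composite
\[
g:=s\circ V(\phi)\circ\eta_C\colon C\to V(M)
\]
is a morphism in $\mathsf{Coalg}_{\mathsf{coc}}$, since $V(\phi)$, $s$ and $\eta_C$ all are.

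The second step is to use the universal property of $T(C)$. It yields a unique morphism $\psi\colon T(C)\to M$ in $\HBrc$ with $V(\psi)\eta_C=g$, namely $\psi=\epsilon_M\circ T(g)$.

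The third step is to verify $p\psi=\phi$. Both $p\psi$ and $\phi$ are morphisms of Hopf braces out of the free object $T(C)$, so by the uniqueness part of the universal property it suffices to check that they agree after composing with $\eta_C$. We compute
\[
V(p\psi)\eta_C=V(p)\,g=V(p)\,s\,V(\phi)\eta_C=V(\phi)\eta_C,
\]
using $ps=\Id_N$ at the level of underlying coalgebras.

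The only point requiring care is that $T$ is genuinely left adjoint to the forgetful functor $V\colon\HBrc\to\mathsf{Coalg}_{\mathsf{coc}}$, so that morphisms out of $T(C)$ are determined by coalgebra maps out of $C$. This is guaranteed by the monadicity result \cite[Theorem 3.1]{Agore}. It is also why the cleft condition is required only at the coalgebra level: the section $s$ need not respect either multiplication, because $g$ only has to be a coalgebra map. With this in place there is no real obstacle, and the argument is a formal consequence of the adjunction.
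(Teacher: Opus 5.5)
Your proof is correct and follows essentially the same route as the paper, which defers to the argument of \cite[Lemma 3.5]{GranSciandra-2}. That argument likewise composes the coalgebra section $s$ with $V(\phi)\eta_C$, lifts the resulting coalgebra map along the unit of $T\dashv V$, and deduces $p\psi=\phi$ from the uniqueness part of the universal property.
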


We are now able to prove the following result:

%\begin{proof}
%Let $C$ be an object in $\mathsf{Coalg}_{\mathrm{coc}}$, $\psi:T(C)\to B$ a morphism in $\HBrc$ and $g:E\to B$ a morphism in $\mathcal{E}$ such that $V(g)$ has a section $i:V(B)\to V(E)$ in $\mathsf{Coalg}_{\mathrm{coc}}$. Then there is a morphism $i \circ V(\psi) \circ \eta_{C}:C\to V(E)$ in $\mathsf{Coalg}_{\mathrm{coc}}$, where $\eta_{C}$ is the $C$-component of the unit $\eta$ of the adjunction \ref{adj}. There is then a unique morphism $\tau \colon T(C)\to E$ in $\mathsf{Hopf}_{\mathrm{coc}}$ such that $V(\tau) \circ \eta_C =i \circ V(\psi) \circ \eta_C$, and then it follows $g \circ \tau = \psi$, since $V(g) \circ i = 1_V(B)$.
%\end{proof}

\begin{proposition}\label{thm: weak E-univ centr extension}
    Given a cocommutative Hopf brace $(B, \cdot, \bullet)$ there exists a weak $\mathcal E$-universal central extension $f \colon (A,\cdot,\bullet) \rightarrow (B,\cdot,\bullet)$ of $(B,\cdot,\bullet)$.
\end{proposition}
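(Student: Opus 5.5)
We follow the standard construction of a weak universal central extension from an $\mathcal E$-projective presentation, as in \cite{GranSciandra-2} for cocommutative Hopf algebras. Recall that a weak $\mathcal E$-universal central extension of $B$ is a central extension $f\colon A\to B$ in $\mathcal E$ such that, for every central extension $g\colon E\to B$ in $\mathcal E$, there is a (not necessarily unique) morphism $\phi\colon A\to E$ with $g\phi=f$.

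\textbf{Construction.} Start from the cleft extension $\epsilon_B\colon TV(B)\to B$ described above, and write $K=\Hker(\epsilon_B)$, a normal Hopf subbrace of $TV(B)$. By Proposition \ref{prop:relativecommnormal}, the relative commutator $[K,TV(B)]_\Hopfc$ is a normal Hopf subbrace of $TV(B)$, and it is contained in $K$. Take the quotient
\[
A=\frac{TV(B)}{TV(B)\cdot[K,TV(B)]_\Hopfc^{+}},
\]
and let $q\colon TV(B)\to A$ be the canonical projection. Since $[K,TV(B)]_\Hopfc\subseteq K$, $\epsilon_B$ factors as $\epsilon_B=f q$ for a unique morphism $f\colon A\to B$ in $\HBrc$. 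By property iv) of $\mathcal E$, $f$ is a cleft extension, and in particular a regular epimorphism.

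\textbf{Centrality.} We show $[\Hker(f),A]_\Hopfc=\Bbbk1_A$ and then invoke Proposition \ref{centralExt}. The key input is the identity $\Hker(f)=q(K)$. This holds because, in the semi-abelian category $\HBrc$, regular images of normal subobjects under regular epimorphisms are normal, and the kernel of $f$ is the image of the kernel of $fq$ whenever $\Hker(q)\subseteq\Hker(fq)$. Every element of $\Hker(f)$ is therefore $q(k)$ with $k\in K$. Since $q$ preserves $\star$, for $a=q(x)$ we get
\[
q(k)\star q(x)=q(k\star x)\in q([K,TV(B)]_\Hopfc)=\Bbbk1_A,
\]
and the same argument handles $q(x)\star q(k)$ and its conjugates. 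Because $q([K,TV(B)]_\Hopfc)$ is trivial, all generators of $[\Hker(f),A]_\Hopfc$ are trivial.

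\textbf{Weak universality.} Let $g\colon E\to B$ be a central extension in $\mathcal E$. By Lemma \ref{lem:projectiveobjects} there is $\phi\colon TV(B)\to E$ with $g\phi=\epsilon_B$. Then $\phi(K)\subseteq\Hker(g)$: for $k\in K$ we have $\phi(k_1)\ot g\phi(k_2)=\phi(k)\ot 1$. Consequently
\[
\phi\bigl([K,TV(B)]_\Hopfc\bigr)\subseteq[\Hker(g),E]_\Hopfc=\Bbbk1,
\]
where the last equality is Proposition \ref{centralExt} applied to $g$; this uses that $\phi$ preserves $\star$, the adjoint action of $\cdot$, and the generation of Hopf subalgebras. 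Hence $\phi$ vanishes on $TV(B)\cdot[K,TV(B)]_\Hopfc^{+}$ and factors through $q$ as $\phi=\bar\phi q$. Finally $g\bar\phi q=\epsilon_B=fq$, and since $q$ is an epimorphism, $g\bar\phi=f$.

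\textbf{Main obstacle.} The delicate step is the identity $\Hker(f)=q(K)$. We would justify it categorically, using Noether-type isomorphisms in the semi-abelian category $\HBrc$, rather than by computing elementwise in Hopf kernels.
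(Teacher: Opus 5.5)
Your proposal is correct and follows the paper's proof essentially step by step. You use the same quotient of $TV(B)$ by $TV(B)\cdot[\Hker(\epsilon_B),TV(B)]_{\Hopfc}^{+}$, the same centrality argument via $\Hker(f)=q(K)$, and the same factorization of $\phi$ through the quotient for weak universality. On $\Hker(f)=q(K)$: the paper obtains it because the restriction $K\to\Hker(f)$ is the pullback of the regular epimorphism $q$ along $\ker(f)$, so the fact about images of normal subobjects that you mention is not actually needed. You also check $[q(K),A]_{\Hopfc}\subseteq q([K,TV(B)]_{\Hopfc})$ directly on generators, where the paper cites \cite[Proposition 3.2]{EG}.
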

\begin{proof}
Given any cocommutative Hopf brace $(B, \cdot, \bullet)$, we consider its  ``free presentation'' given by 
$ \epsilon_B \colon (TV(B),\cdot,\bullet) \rightarrow (B,\cdot,\bullet)$. 
One then defines the quotient 
\[
\pi:\begin{tikzcd}
TV(B) &  \frac{TV(B)}{TV(B)\cdot[ \mathrm{Hker}(\epsilon_B), TV(B) ]_{\Hopfc}^+}=:A 
\arrow[from=1-1, to=1-2],
\end{tikzcd}
\]
where $[ \mathrm{Hker}(\epsilon_B), TV(B) ]_{\mathsf{Hopf}_{\mathsf{coc}}}$
is the relative commutator defined as in Definition \ref{def: commutator hopfc}, which is a normal Hopf subbrace (Proposition \ref{prop:relativecommnormal}), so that $\pi$ is in $\HBrc$. Since $\pi$ is the cokernel of the inclusion ${[ \mathrm{Hker}(\epsilon_B), TV(B) ]_{\Hopfc}}\hookrightarrow TV(B)$ in $\HBrc$, there is a unique morphism $f$ in $\HBrc$ such that $f\pi = \epsilon_B$.
The morphism $\epsilon_B$ is in $\mathcal E$, hence $f$ is in $\mathcal E$ by the property iv)  of the class $\mathcal E$ of cleft extensions recalled above.
To explain why this extension $f$ is central in the sense of Proposition \ref{centralExt}, consider the following commutative diagram of short exact sequences in $\HBrc$
\begin{equation}\label{SES}  
\begin{tikzcd}
	0 & (\mathrm{Hker}(\epsilon_{B}),\cdot,\bullet) & (TV(B),\cdot,\bullet) & (B,\cdot,\bullet) & 0\\
0 & (\mathrm{Hker}(f),\cdot,\bullet) & (A,\cdot,\bullet) & (B,\cdot,\bullet) & 0,
	\arrow[hook, from=1-2, to=1-3, "\mathsf{ker}(\epsilon_{B})"]
	\arrow[from=1-2, to=2-2, "\overline{\pi}"']
%\arrow["\lrcorner"{anchor=center, pos=0.125}, draw=none, from=1-1, to=2-2]
	\arrow[from=1-3, to=2-3, "\pi"]
	\arrow[from=1-3, to=1-4, "\epsilon_{B}"]
    \arrow[from=1-4, to=1-5, "{}"]
    \arrow[from=1-1, to=1-2, "{}"]
    \arrow[from=1-4, to=2-4, "{1_B}"]
    \arrow[from=2-4, to=2-5, "{}"]
    \arrow[from=2-1, to=2-2, "{}"]
    \arrow[from=2-3, to=2-4, "f"']
    \arrow[hook, from=2-2, to=2-3, "\mathsf{ker}(f)"']
\end{tikzcd}
\end{equation}
where $\overline{\pi}$ is the restriction of the morphism $\pi$ to $\mathrm{Hker}(\epsilon_{B})$. The left-hand square in this diagram is a pullback, since  the horizontal sequences are exact and the right-hand vertical morphism $1_B$ is a monomorphism (see \cite[Lemma 1]{Bourn}).
It follows that $\overline{\pi}:(\mathrm{Hker}(\epsilon_{B}),\cdot,\bullet)\to(\mathrm{Hker}(f),\cdot,\bullet)$ is a regular epimorphism in $\HBrc$ (i.e.\ it is surjective%by \cite[Corollary 4.8]{GranSciandra}
), since it is the pullback of  the regular epimorphism $\pi$ along $\mathsf{ker}(f)$ in the semi-abelian category $\HBrc$. It follows that $\pi(\mathrm{Hker}(\epsilon_{B}))=\mathrm{Hker}(f),$ and
\[
[\mathrm{Hker} (f), A]_{\Hopfc} = [ \pi (\mathrm{Hker} (\epsilon_B)), \pi (TV(B))]_{\Hopfc} = \pi( [\mathrm{Hker} (\epsilon_B), TV(B)]_{\Hopfc})=\Bbbk1_{A},
\]
\normalsize
where the second equality is a direct consequence of \cite[Proposition 3.2]{EG} applied to the right-hand square  in diagram \eqref{SES}.
%which is clearly a pushout of regular epimorphisms.
Now that we have proved that the extension $f \colon (A,\cdot,\bullet) \rightarrow (B,\cdot,\bullet)$ is central in the sense of Proposition \ref{centralExt}, it remains to show that it is weak $\mathcal E$-universal. For this, let $g \colon (C,\cdot,\bullet) \rightarrow (B,\cdot,\bullet)$ be another central extension of $(B,\cdot,\bullet)$ in $\mathcal E$. The fact that $TV(B)$ is $\mathcal E$-projective (Lemma \ref{lem:projectiveobjects}) %and $g \in \mathcal E$ 
implies that there is a morphism $\phi \colon (TV(B),\cdot,\bullet) \rightarrow (C,\cdot,\bullet)$ in $\HBrc$ such that $g\phi = \epsilon _B$. The fact that $g$ is central, in the sense that $[ \mathrm{Hker}(g), C ]_{\Hopfc} = \Bbbk1_{C}$, implies that $[\mathrm{Hker}(\epsilon_B) , TV(B)]_{\Hopfc} \subseteq \Hker (\phi)$. By the universal property of the quotient $\pi$ there is then a (unique) morphism $\psi \colon (A,\cdot,\bullet) \rightarrow (C,\cdot,\bullet)$ in $\HBrc$ such that $\psi\pi = \phi$. This morphism $\psi$ is such that $g\psi = f$, hence it is a morphism of (cleft) extensions and $f$ is weak $\mathcal{E}$-universal.
\end{proof}

The existence of a weak $\mathcal E$-universal central extension for any cocommutative Hopf brace $(B, \cdot, \bullet)$ will now make it possible to define and study the first and the second homology Hopf braces of $(B,\cdot,\bullet)$ relatively to the adjunction \eqref{Adj HBr Hopf}.

\emph{Baer invariants} have been widely studied in the context of semi-abelian categories with enough regular projectives \cite{EVdL,EG} and, more recently, in the category of cocommutative Hopf algebras \cite{GranSciandra-2}.  
Our approach is based on the observation that the semi-abelian category $\HBrc$ has enough $\mathcal E$-projectives (see Lemma \ref{lem:projectiveobjects}), and that a characterization of central extensions has been established in Proposition \ref{centralExt}. Note that this work can also be seen as a Hopf-theoretic extension of the recent results on the homology of skew braces achieved in \cite{GLV}.
A short exact sequence 
\begin{equation}\label{eq:shortexact}
\begin{tikzcd}
	0 & (\mathrm{Hker}(p),\cdot,\bullet) & (P,\cdot,\bullet) & (B,\cdot,\bullet) & 0
	\arrow[from=1-1, to=1-2]
	\arrow[hook, from=1-2, to=1-3, ""]
	\arrow[from=1-3, to=1-4,"p"]
	\arrow[from=1-4, to=1-5]
\end{tikzcd}
\end{equation}
in $\HBrc$ is an $\mathcal E$-\textit{projective presentation} of $(B,\cdot,\bullet)$ if $(P,\cdot,\bullet)$ is $\mathcal E$-projective and $p$ is in $\mathcal E$. 
 We'll often denote an $\mathcal{E}$-projective presentation by $p:(P,\cdot,\bullet)\to(B,\cdot,\bullet)$. 

Given two $\mathcal E$-projective presentations $p:(P,\cdot,\bullet)\to(B,\cdot,\bullet)$ and $p':(P',\cdot,\bullet)\to(B',\cdot,\bullet)$, a \textit{morphism of $\mathcal{E}$-projective presentations} is a pair of morphisms $g:(P,\cdot,\bullet)\to (P',\cdot,\bullet)$, $f:(B,\cdot,\bullet)\to(B',\cdot,\bullet)$ in $\HBrc$ such that $p'g=fp$.
% \begin{equation}\label{morphismpresentations}
% \begin{tikzcd}
% 	\mathrm{Hker}(p) & P & B \\
% 	\mathrm{Hker}(p') & P' & B'
% 	\arrow[hook, from=1-1, to=1-2]
% 	\arrow[from=1-1, to=2-1, "\overline{g}"']
% 	\arrow[from=1-2, to=1-3, "p"]
% 	\arrow[from=1-2, to=2-2, "g"']
% 	\arrow[from=1-3, to=2-3, "f"]
% 	\arrow[hook, from=2-1, to=2-2]
% 	\arrow[from=2-2, to=2-3, "p'"']
% \end{tikzcd}
% \end{equation}
%where $\overline{g}$ is the unique morphism making the left-hand square commute.
We write $\mathsf{Pr}(\HBrc)$ for the category of $\mathcal{E}$-projective presentations in $\HBrc$.

A functor $F \colon  \mathsf{Pr}(\HBrc) \rightarrow \HBrc$ is called a \emph{Baer invariant} \cite{EVdL} if, given two morphisms $(g,f)$ and $(g',f')$ in $\mathsf{Pr}(\HBrc)$ from $p:(P,\cdot,\bullet)\to(B,\cdot,\bullet)$ to $p':(P',\cdot,\bullet)\to(B',\cdot,\bullet)$ as above, if 
$f=f'$ then $F((g,f))=F((g',f))$.

By adapting the arguments in \cite[Proposition 4.1]{EG} and in \cite{GranSciandra-2}, we get:
\begin{proposition}\label{prop: H2 Baer invariant}
    Given any $\mathcal E$-projective presentation $p:(P,\cdot,\bullet)\to(B,\cdot,\bullet)$
%\begin{equation}\label{E-proj}
%\begin{tikzcd}
	%\mathrm{Hker}(p) & P & B
	%\arrow[hook, from=1-1, to=1-2]
	%\arrow[from=1-2, to=1-3, "p"]
%\end{tikzcd}
 %\begin{tikzcd}
	%0 & R & P & B & 0
	%\arrow[from=1-1, to=1-2]
	%\arrow[" ", from=1-2, to=1-3, ""]
	%\arrow[" ",from=1-3, to=1-4," "]
	%\arrow[from=1-4, to=1-5]
%\end{tikzcd}
%\end{equation}
of a cocommutative Hopf brace $(B,\cdot,\bullet)$, the quotient  
\[
  \mathsf{H}_2 (B) = \frac{\mathrm{Hker}(p) \cap [P,P]_{\Hopfc}}{(\mathrm{Hker}(p) \cap [P,P]_{\Hopfc})\cdot[\mathrm{Hker}(p),P]_{\Hopfc}^{+}}
\]
is a \emph{Baer invariant}, that determines a functor $\mathsf{H}_2 \colon \HBrc \rightarrow \HBrc $. 
\end{proposition}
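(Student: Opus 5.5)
We follow the scheme of \cite[Proposition 4.1]{EG} and its Hopf-algebraic version in \cite{GranSciandra-2}. Write $K=\mathrm{Hker}(p)$ and $D=K\cap[P,P]_{\Hopfc}$; here $[P,P]_{\Hopfc}$ is the commutator of Definition \ref{def: commutator hopfc} with $I=P$, namely $P\star P$.

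\textbf{Step 1: the quotient is a well-defined Hopf brace.} By Proposition \ref{prop:relativecommnormal}, $[K,P]_{\Hopfc}$ and $[P,P]_{\Hopfc}$ are normal Hopf subbraces of $P$, and $K$ is normal as a kernel. Intersections of normal Hopf subbraces are normal, being meets of kernels in the semi-abelian category $\HBrc$, so $D$ is normal in $P$. We also have $[K,P]_{\Hopfc}\subseteq K\cap[P,P]_{\Hopfc}=D$: the inclusion in $K$ holds by Corollary \ref{cor:equivalentnotionsnormal}, and the inclusion in $[P,P]_{\Hopfc}$ holds because the generators $i\star h$, $h\star i$ and their conjugates already lie in $P\star P$, which is normal in $P^{\cdot}$. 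Hence $[K,P]_{\Hopfc}$ is normal in $P$, and therefore normal in $D$. So $D/D\cdot[K,P]_{\Hopfc}^+$ is the cokernel in $\HBrc$ of $[K,P]_{\Hopfc}\hookrightarrow D$, and $\mathsf{H}_2$ is defined on objects of $\mathsf{Pr}(\HBrc)$.

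\textbf{Step 2: functoriality.} Let $(g,f)$ be a morphism from $p\colon P\to B$ to $p'\colon P'\to B'$. Since $p'g=fp$, the map $g$ sends $K$ into $K'=\mathrm{Hker}(p')$. Because $g$ is a morphism of Hopf braces, it preserves $\star$ and conjugation. It therefore maps generators to generators, and $g([X,Y]_{\Hopfc})\subseteq[g(X),g(Y)]_{\Hopfc}$. It follows that $g(D)\subseteq D'$ and $g([K,P]_{\Hopfc})\subseteq[K',P']_{\Hopfc}$, so $g$ induces $\mathsf{H}_2(g,f)$ on the quotients, compatibly with identities and composition.

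\textbf{Step 3: Baer invariance, the main step.} Let $(g,f)$ and $(g',f)$ be two morphisms with the same $f$; we must show they induce the same map on $\mathsf{H}_2$. For $x\in P$, the element $g(x_1)\cdot S(g'(x_2))$ lies in $K'$, since $p'g=p'g'$; this is the computation from the proof of Proposition \ref{centralExt} leading to \eqref{eq Sa.b in Hker}, because $g(x_1)\ot g'(x_2)\in\mathrm{Eq}(p')$. For generators $a\star b$ of $[P,P]_{\Hopfc}$ we need
\[
g(a_1\star b_1)\cdot S\bigl(g'(a_2\star b_2)\bigr)\in [K',P']_{\Hopfc}.
\]
Write $g(a)=k\cdot g'(a)$ with $k$ (Sweedler-wise) in $K'$. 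Expanding with Lemma \ref{lem:propertiesasterisk} 1) and 2) expresses $(k\cdot u)\star(l\cdot v)$ as $u\star v$ multiplied and conjugated by terms of the form $k\star -$, $-\star l$ and $P'$-conjugates of these. All such terms lie in $[K',P']_{\Hopfc}$. One convenient route is to work in the quotient $P'/P'\cdot[K',P']_{\Hopfc}^+$, where $K'$ becomes central in the sense of Proposition \ref{centralExt}, so that $x\cdot h=x\bullet h$ and the $\star$-products with kernel elements are trivial. The categorical argument in that proof shows that $R(s)=R(t)$ on $\mathrm{Eq}$, and this gives $\overline{g}(a\star b)=\overline{g'}(a\star b)$ directly. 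From agreement on generators, multiplicativity and coalgebra compatibility give that $\overline{g}$ and $\overline{g'}$ agree on all of $[P,P]_{\Hopfc}$. They therefore agree on $D$, and so they induce the same map on $\mathsf{H}_2$.

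Finally, any two $\mathcal E$-projective presentations of the same $B$ admit morphisms over $1_B$ in both directions, by $\mathcal E$-projectivity (Lemma \ref{lem:projectiveobjects}). Baer invariance makes the composites the identity, so $\mathsf{H}_2(B)$ is independent of the chosen presentation up to canonical isomorphism. Since every $B$ has the presentation $\epsilon_B$, this yields a functor $\mathsf{H}_2\colon\HBrc\to\HBrc$. The delicate point is Step 3: the passage from the pointwise condition $g(x_1)\cdot S(g'(x_2))\in K'$ to equality modulo $[K',P']_{\Hopfc}$. I would handle it through the centrality characterization of Proposition \ref{centralExt} rather than by raw Sweedler manipulations.
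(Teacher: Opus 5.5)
Your proposal is correct and follows essentially the route the paper points to, namely the adaptation of \cite[Proposition 4.1]{EG}, which the paper invokes without giving details: functoriality holds because Hopf brace morphisms send generators of the commutators to generators, and Baer invariance follows because, writing $N=[K',P']_{\Hopfc}$ and $\pi\colon P'\to P'/P'\cdot N^{+}$, the induced extension $P'/P'\cdot N^{+}\to B'$ is central, so $R(s)=R(t)$ on its kernel pair forces $\pi g$ and $\pi g'$ to agree on $R(P)=P\star P=[P,P]_{\Hopfc}$. The one step you leave implicit is passing from equality in $P'/P'\cdot N^{+}$ to equality in $D'/D'\cdot N^{+}$, which amounts to $D'\cap P'\cdot N^{+}=D'\cdot N^{+}$; this holds because the surjection $D'\to\pi(D')$ is the cokernel in $\HBrc$ of its kernel $D'\cap\mathrm{Hker}(\pi)=D'\cap N=N$.
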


We write $\mathsf{H}_1$ for the functor sending a cocommutative Hopf brace $(B,\cdot,\bullet)$
to its universally associated cocommutative Hopf algebra, that is to the quotient 
\[
\mathsf{H}_1(B) =F(B) = \frac{B}{B\cdot[B,B]^{+}_{\Hopfc}},
\]
where $F$ is the left adjoint in the adjunction \eqref{Adj HBr Hopf}. 
%We also write 
%for the functor sending $B$ to the quotient 
%\begin{equation}\label{Hopf-formula}
%\mathsf{H}_2 (B) =  \frac{\mathrm{Hker}(p)\cap[P,P]_{\Hopfc}}{(\mathrm{Hker}(p)\cap[P,P])[\mathrm{Hker}(p),P]_{\Hopfc}^{+}},
%\end{equation}
%where $p:P\to B$ is an arbitrary $\mathcal{E}$-projective presentation of $B$. 
By replacing regular projective presentations with $\mathcal E$-projective presentations %and short exact sequences with short $\mathcal E$-exact sequences
one can adapt the proof of \cite[Theorem 4.6]{EG} (see also \cite{GranSciandra-2}) to get the following version of the classical Stallings-Stammbach exact sequence (in group theory) for cocommutative Hopf braces:
\begin{theorem}\label{thm: Stallings-Stammbach}  
Any short $\mathcal E$-exact sequence 
% \begin{tikzcd}
% 	 0 & \mathrm{Hker}(f) & A & B & 0 
% 	\arrow[hook, from=1-1, to=1-2]
% 	\arrow[from=1-2, to=1-3, "k"]
%     \arrow[from=1-3, to=1-4, "f"]
%     \arrow[from=1-4, to=1-5, " "]
% \end{tikzcd}
in $\HBrc$ as in \eqref{eq:shortexact} induces the following $5$-term exact sequence in $\HBrc$
\[
\begin{tikzcd}
	 \mathsf{H}_2 (P)  & \mathsf{H}_2 (B) & \frac{\mathrm{Hker}(p)}{\mathrm{Hker}(p)\cdot[\mathrm{Hker}(p),P]_{\Hopfc}^+} & \mathsf{H}_1(P) &  \mathsf{H}_1(B) & 0.
	\arrow[ from=1-1, to=1-2, "\mathsf{H}_2(p)"]
	\arrow[from=1-2, to=1-3, ""]
    \arrow[from=1-3, to=1-4, " "]
    \arrow[from=1-4, to=1-5, "\mathsf{H}_1(p)"]
    \arrow[from=1-5, to=1-6, " "]
\end{tikzcd}
\]
\end{theorem}

\end{section}

\section{Abelian objects and another central series}\label{abelian}

% In Section \ref{sec: relative commutator} we have investigated central extensions and commutators relative to the adjunction \eqref{Adj HBr Hopf}. We now want to compare these notions with central extensions and commutators defined in \cite{GranSciandra} and link them to another lower central series. 

% We recall that, in any semi-abelian category $\mathcal{C}$, the subcategory $\mathsf{Ab}(\mathcal{C})$ of abelian objects in $\mathcal{C}$ is a Birkhoff subcategory of $\mathcal{C}$.
It is shown in \cite[Proposition 5.5]{GranSciandra} that the Birkhoff subcategory of abelian objects of $\HBrc$ coincides with the category $\Hopfcc$, and the abelianisation functor $\mathsf{ab}$
%the two Hopf algebra structures coincide (i.e.\ $\cdot=\bullet$) and, moreover, they are abelian (i.e.\ commutative and cocommutative Hopf algebras). 
% Therefore, we have the following adjunction \cite[Corollary 8.8]{GranSciandra}:
% \begin{equation}
% \begin{tikzcd}
% \HBrc &&& \mathsf{Hopf}_{\mathrm{coc}}^{\mathsf{com}}
% 	\arrow[" "{name=1, anchor=center, inner sep=0}, curve={height=-8pt}, from=1-1, to=1-4,"\mathsf{ab}"]
% 	\arrow[" "{name=1, anchor=center, inner sep=0}, curve={height=-8pt}, from=1-4, to=1-1, "U"]
%     \arrow["\vdash"{anchor=center, rotate=90}, draw=none, from=1-4, to=1-1].
% \end{tikzcd}
% \end{equation}
% \noindent where $U$ is the functor sending any commutative and cocommutative Hopf algebra to the cocommutative Hopf brace whose two multiplications coincide, and %The explicit description of the left adjoint $\mathsf{ab}$ is in \cite[Corollary 8.8]{GranSciandra}. It 
in \eqref{adj Hbr Hopfcc} is defined, on any Hopf brace $(H,\cdot,\bullet)$,  by the assignment
$$(H,\cdot,\bullet)\longmapsto \mathsf{ab}(H)= \frac{H}{H\cdot [H,H]^+},$$
where $[H,H]$ is the Hopf subalgebra of $H^{\cdot}$---that is in fact a normal Hopf subbrace---generated by
$\{a\star b,[a,b]_\cdot\mid a,b\in H\},$
where $[a,b]_\cdot=a_1\cdot b_1\cdot S(a_2)\cdot S(b_2)$. 
One can compute the functor $\mathsf{ab} \colon \HBrc \rightarrow \Hopfcc$ as the composite of $F\colon \HBrc\to \Hopfc$ with the left adjoint $\mathsf{ab}'\colon \Hopfc \to \Hopfcc$ to the forgetful functor $U'\colon\Hopfcc\to\Hopfc$, since \eqref{adj Hbr Hopfcc} is the composite of the following two adjunctions
\begin{equation}
    \begin{tikzcd}
\HBrc &&& \Hopfc &&& \Hopfcc
	\arrow[" "{name=1, anchor=center, inner sep=0}, curve={height=-8pt}, from=1-1, to=1-4,"F"]
	\arrow[" "{name=1, anchor=center, inner sep=0}, curve={height=-8pt}, from=1-4, to=1-1, "G"]
    \arrow["\vdash"{anchor=center, rotate=90}, draw=none, from=1-4, to=1-1]
    \arrow[" "{name=1, anchor=center, inner sep=0}, curve={height=-8pt}, from=1-4, to=1-7,"\mathsf{ab'}"]
	\arrow[" "{name=1, anchor=center, inner sep=0}, curve={height=-8pt}, from=1-7, to=1-4, "U'"]
    \arrow["\vdash"{anchor=center, rotate=90}, draw=none, from=1-7, to=1-4].
\end{tikzcd}
\end{equation}
For a cocommutative Hopf brace $(H,\cdot,\bullet)$ and a normal Hopf subbrace $(I,\cdot,\bullet)$, the commutator relative to \eqref{adj Hbr Hopfcc} coincides with the Huq commutator described in \cite{GranSciandra}:
$$[I,H]_\Hopfcc=[I,H]_\mathsf{Huq}=\langle\{[i,h]_\cdot,[i,h]_\bullet,i\star h\mid i\in I, h\in H\}\rangle_N,$$
where $\langle-\rangle_N$ denotes the normal closure in $H$.
By applying the results in \cite{BournGran} to the semi-abelian category $\HBrc$, we know that a surjective morphism $f\colon (A,\cdot,\bullet)\to(B,\cdot,\bullet)$ in $\HBrc$ is a central extension with respect to the adjunction \eqref{adj Hbr Hopfcc} if and only if $\Hker(f)$ and $A$ commute in the sense of Huq, namely if and only if $[\Hker(f),A]_\text{Huq}=\Bbbk1_{A}$. By \cite[Lemma 8.2]{GranSciandra}, this is also equivalent to the fact that $a\cdot k=k\cdot a=k\bullet a =a\bullet k$ for any $k\in \Hker(f)$ and any $a\in A$.

We now define another central series that can be interpreted as a generalization of the lower central series of skew braces defined in \cite{BP} (see also \cite{KanrarRoelantsYadav}). Given a cocommutative Hopf brace $(H,\cdot,\bullet)$, we define
\begin{equation*}
    \Gamma_1(H)\coloneqq H,\quad \quad%\Gamma_{n+1}(H)\coloneqq \langle 
    %\Gamma_n(H)\star H,H\star \Gamma_n(H),[H,\Gamma_n(H)]_\cdot\rangle_\cdot,\quad 
    \Gamma_{n+1}(H)\coloneqq \langle\{ 
    i\star h,h\star i,[h,i]_\cdot\ |\ h\in H,\ i\in \Gamma_n(H)\}\rangle_\cdot
\end{equation*}
where %$[H,\Gamma_n(H)]_\cdot=\{h_1\cdot a_1\cdot S(h_2)\cdot S(a_2)\mid h\in H, a\in\Gamma_n(H)\}$ and 
$\langle-\rangle_\cdot$ denotes the generated Hopf subalgebra of $H^{\cdot}$.

\begin{proposition}\label{prop: Gamma_n normal}
    Let $(H,\cdot,\bullet)$ be a cocommutative Hopf brace. For any $n\geq 1$, $\Gamma_n(H)$ is a normal Hopf subbrace of $(H,\cdot,\bullet)$.
\end{proposition}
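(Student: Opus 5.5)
We argue by induction on $n$, following the pattern of Proposition \ref{prop:A^(n) normal in A}. The case $n=1$ is trivial. Assume $\Gamma_n(H)$ is a normal Hopf subbrace. By Corollary \ref{cor:equivalentnotionsnormal}, it then suffices to prove two things: $\Gamma_{n+1}(H)$ is a normal Hopf subalgebra of $H^{\cdot}$, and $h\star x,\ x\star h\in\Gamma_{n+1}(H)$ for all $h\in H$, $x\in\Gamma_{n+1}(H)$.

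First, $\Gamma_{n+1}(H)\subseteq\Gamma_n(H)$. The generators $i\star h$ and $h\star i$ lie in $\Gamma_n(H)$ by Corollary \ref{cor:equivalentnotionsnormal}. The generators $[h,i]_\cdot=h_1\cdot i_1\cdot S(h_2)\cdot S(i_2)$ lie there because $\Gamma_n(H)^{\cdot}$ is normal in $H^{\cdot}$.

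Next comes normality in $H^{\cdot}$. Since $\Gamma_{n+1}(H)$ is the Hopf subalgebra generated by a set of generators and the adjoint action is multiplicative, it is enough to check that the adjoint action $a_1\cdot g\cdot S(a_2)$ sends each generator $g$ into $\Gamma_{n+1}(H)$.
\begin{itemize}
\item For $g=[h,i]_\cdot$: we have $a_1\cdot[h,i]_\cdot\cdot S(a_2)=[a_1\cdot h,i]_\cdot\cdot S([a_2,i']_\cdot)$, the Hopf analogue of $[ah,i]=a[h,i]a^{-1}[a,i]$ with suitable Sweedler legs on $i$. Using cocommutativity to reorder legs, this is a product of generators and antipodes of generators.
\item For $g=i\star h$: the computation in Proposition \ref{prop:A^(n) normal in A} (2) gives $a_1\cdot(i\star h)\cdot S(a_2)=S(i_1\star a_1)\cdot(i_2\star(a_2\cdot h))$.
\item For $g=h\star i$: Lemma \ref{lem:propertiesasterisk} 1) gives $h\star(a\cdot i)$ expressed through $h\star a$, the conjugate $a\cdot(h\star i)\cdot S(a)$, and so on. This is not directly usable, since $h\star a$ need not lie in $\Gamma_{n+1}$. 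Instead, write $a_1\cdot(h\star i)\cdot S(a_2)=[a_1,h\star i]_\cdot\cdot(h\star i)$ with legs distributed appropriately. Since $h\star i\in\Gamma_n(H)$, each factor $[a,\,h\star i]_\cdot$ is a generator of the third kind.
\end{itemize}
The same trick, $a_1 x S(a_2)=[a_1,x_1]_\cdot\cdot x_2$, in fact works uniformly for any $x\in\Gamma_{n+1}(H)\subseteq\Gamma_n(H)$. This gives normality in one line, so I would use that argument for all three cases.

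Then the $\star$-closure. Take $x\in\Gamma_{n+1}(H)\subseteq\Gamma_n(H)$ and $h\in H$. Then $x\star h$ and $h\star x$ are generators of $\Gamma_{n+1}(H)$ directly, since $x\in\Gamma_n(H)$. Hence Corollary \ref{cor:equivalentnotionsnormal} applies, and $\Gamma_{n+1}(H)$ is a normal Hopf subbrace.

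The main obstacle is the Hopf-theoretic bookkeeping in the identity $a_1\cdot x\cdot S(a_2)=[a_1,x_1]_\cdot\cdot x_2$. Checking it: $[a_1,x_1]_\cdot\cdot x_2=a_1\cdot x_1\cdot S(a_2)\cdot S(x_2)\cdot x_3=a_1\cdot x\cdot S(a_2)$, which holds by the antipode axiom and coassociativity. So the obstacle is mild, and the argument reduces cleanly to the facts that $\Gamma_{n+1}(H)\subseteq\Gamma_n(H)$ and that $\Gamma_{n+1}(H)$ is a subcoalgebra; the latter holds because it is a Hopf subalgebra.
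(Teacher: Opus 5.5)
Your proof is correct, and it is organized differently from the paper's.

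The paper verifies the three conditions of Definition~\ref{def:normal} directly on the generators of $\Gamma_{n+1}(H)$:
\begin{itemize}
\item stability under $H\rightharpoonup(-)$ via item 3) of Lemma~\ref{lem:propertiesasterisk}, using that $\Gamma_n(H)$ is normal in $H^{\bullet}$ and $\rightharpoonup$-stable;
\item normality in $H^{\cdot}$ through a separate identity for each kind of generator: item 1) of Lemma~\ref{lem:propertiesasterisk} for $a\star y$, your commutator identity for $y\star a$, and conjugation of both entries for $[y,a]_\cdot$;
\item normality in $H^{\bullet}$ via item 4).
\end{itemize}

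You instead observe that once $\Gamma_{n+1}(H)\subseteq\Gamma_n(H)$ is established, every $x\in\Gamma_{n+1}(H)$ is itself an admissible entry of the generators. So $x\star h$ and $h\star x$ are generators, and $a_1\cdot x\cdot S(a_2)=[a,x_1]_\cdot\cdot x_2$ lies in $\Gamma_{n+1}(H)$. (Strictly, the bracket is $[a,x_1]_\cdot$, as your verification line shows.) Corollary~\ref{cor:equivalentnotionsnormal} then delivers $\rightharpoonup$-stability and normality in $H^{\bullet}$.

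What your route buys:
\begin{itemize}
\item It avoids items 1) and 3) of Lemma~\ref{lem:propertiesasterisk}.
\item It avoids any passage from generators to the generated Hopf subalgebra.
\item It proves more: any Hopf subalgebra $B$ of $H^{\cdot}$ with $B\subseteq\Gamma_n(H)$ that contains all $i\star h$, $h\star i$, $[h,i]_\cdot$ (for $h\in H$, $i\in\Gamma_n(H)$) is a normal Hopf subbrace. This is the analogue of the group fact that any subgroup lying between $[N,G]$ and $N$ is normal in $G$.
\end{itemize}

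The paper's generator-wise route mirrors its treatment of the right series $H^{(n)}$, where your shortcut is unavailable. There $H^{(n+1)}$ is generated only by the elements $x\star b$, with neither $b\star x$ nor $\cdot$-commutators among its generators.

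Your three bulleted case computations are superseded by the uniform argument and can simply be dropped. This includes the loosely specified one for $[h,i]_\cdot$ with an undefined $i'$.
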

\begin{proof}
We prove the thesis by induction on $n$. For $n=1$, the statement is trivial. Now, we assume that $\Gamma_n(H)$ is a normal Hopf subbrace of $(H,\cdot,\bullet)$ for an arbitrary $n$ and we prove that $\Gamma_{n+1}(H)$ is normal as well. We first observe that, since $i\star h,h\star i\in\Gamma_{n}(H)$ for any $i\in\Gamma_{n}(H)$ and $h\in H$ by Corollary \ref{cor:equivalentnotionsnormal} and $[h,i]_{\cdot}\in\Gamma_{n}(H)$ as well, we have $\Gamma_{n+1}(H)\subseteq\Gamma_{n}(H)$.

\noindent 1) Firstly, we check that $H\rightharpoonup \Gamma_{n+1}(H)\subseteq \Gamma_{n+1}(H)$. It suffices to check it for the generating elements of $\Gamma_{n+1}(H)$. Using 3) of Lemma \ref{lem:propertiesasterisk}, for any $x,y\in H$ and $a\in \Gamma_n(H)$, we get:
        \begin{align*}
           x\rightharpoonup(a\star y)&=%\overset{\eqref{lem:propertiesasterisk}}{=} 
           (x_1\bullet a\bullet T(x_2))\star (x_3\rightharpoonup y)\in \Gamma_n(H)\star H\subseteq\Gamma_{n+1}(H)\\
           x\rightharpoonup (y\star a)&=%\overset{\eqref{lem:propertiesasterisk}}{=}
           (x_1\bullet y\bullet T(x_2))\star (x_3\rightharpoonup a)\in H\star \Gamma_n(H)\subseteq\Gamma_{n+1}(H)\\
           x\rightharpoonup [y,a]_\cdot&=(x_{1}\rightharpoonup y_{1})\cdot(x_{2}\rightharpoonup a_{1})\cdot(x_{3}\rightharpoonup S(y_{2}))\cdot(x_{4}\rightharpoonup S(a_{2}))\\&=(x_{1}\rightharpoonup y_{1})\cdot(x_{3}\rightharpoonup a_{1})\cdot S(x_{2}\rightharpoonup y_{2})\cdot S(x_{4}\rightharpoonup a_{2})\\&=[x_1\rightharpoonup y,x_2\rightharpoonup a]\in [H,\Gamma_n(H)]_\cdot\subseteq\Gamma_{n+1}(H),
        \end{align*}
        as $\Gamma_n(H)$ is a normal in $H^{\bullet}$ and $H\rightharpoonup \Gamma_n(H)\subseteq \Gamma_n(H)$.

\noindent 2) For any $x,y\in H$ and $a\in \Gamma_n(H)$, using 1) of Lemma \ref{lem:propertiesasterisk}, we obtain
        \begin{align*}
            x_1\cdot (a\star y)\cdot S(x_2)&=S(a_1\star x_1)\cdot (a_2\star x_2)\cdot x_3\cdot (a_3\star y)\cdot S(x_4)\\
            &=S(a_1\star x_1)\cdot (a_2\star(x_2\cdot y))\in \Gamma_n(H)\star H\subseteq\Gamma_{n+1}(H).
        \end{align*}
        \noindent Furthermore, since $y\star a\in \Gamma_n(H)$ %for all $a\in\Gamma_{n}(H),y\in H$ 
        (Corollary \ref{cor:equivalentnotionsnormal}) and $\Gamma_n(H)$ is normal in $H^{\cdot}$, we have
        \begin{align*}
            x_1\cdot (y\star a)\cdot S(x_2)&=[x_1,y_1\star a_1]_\cdot \cdot (y_2\star a_2)\in [H,\Gamma_n(H)]_\cdot\cdot (H\star \Gamma_n(H))\subseteq\Gamma_{n+1}(H)\\
            x_1\cdot [y,a]_{\cdot}\cdot S(x_2)&=[x_1\cdot y\cdot S(x_2),x_3\cdot a\cdot S(x_4)]_\cdot \in [H,\Gamma_n(H)]_\cdot\subseteq\Gamma_{n+1}(H).
        \end{align*}
        Thus, we obtain that $\Gamma_{n+1}(H)$ is normal in $H^{\cdot}$.

\noindent 3) Finally, we prove that $\Gamma_{n+1}(H)$ is normal in $H^{\bullet}$. Given $a\in\Gamma_{n+1}(H)$ and $x\in H$, since $\Gamma_{n+1}(H)\subseteq\Gamma_{n}(H)$, we have $a_1\cdot (a_2\star T(x))\in \Gamma_{n+1}(H)\cdot (\Gamma_{n}(H)\star H)\subseteq \Gamma_{n+1}(H),$
        so that, by 4) of Lemma \ref{lem:propertiesasterisk} and points 1) and 2) proven above, we obtain:
        \[
        x_1\bullet a\bullet T(x_2)=x_1\cdot (x_2\rightharpoonup (a_1\cdot (a_2\star T(x_3))))\cdot S(x_4)\in \Gamma_{n+1}(H).\qedhere\]
%This concludes the proof that $\Gamma_{n+1}(H)$ is a normal Hopf subbrace of $H$.
\end{proof}

We can now show the link between the series just defined and the Huq commutator, that is also the commutator relative to the adjunction \eqref{adj Hbr Hopfcc}.

\begin{proposition}
    For any $n\geq 1$, the equality $\Gamma_{n+1}(H)=[\Gamma_n(H),H]_\mathsf{Huq}$ holds.
\end{proposition}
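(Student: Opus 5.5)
We prove the two inclusions separately, using the description of the Huq commutator recalled above:
\[
[\Gamma_n(H),H]_{\mathsf{Huq}}=\langle\{[i,h]_\cdot,[i,h]_\bullet,i\star h\mid i\in \Gamma_n(H),\ h\in H\}\rangle_N.
\]
Here $\Gamma_n(H)$ is a normal Hopf subbrace by Proposition \ref{prop: Gamma_n normal}, so the commutator makes sense.

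For $\Gamma_{n+1}(H)\subseteq[\Gamma_n(H),H]_{\mathsf{Huq}}$, it suffices to show that each generator of $\Gamma_{n+1}(H)$ lies in the normal Hopf subbrace $N:=[\Gamma_n(H),H]_{\mathsf{Huq}}$, since $N$ is in particular a Hopf subalgebra of $H^{\cdot}$.
\begin{itemize}
\item The generators $i\star h$ are among the defining generators of $N$.
\item For $[h,i]_\cdot$, we use the antipode $S$ and cocommutativity: one has $S([i,h]_\cdot)=h_1\cdot i_1\cdot S(h_2)\cdot S(i_2)=[h,i]_\cdot$, so $[h,i]_\cdot\in N$.
\item For $h\star i$, the idea is that in the quotient $\pi\colon H\to H/H\cdot N^+$ the image of $\Gamma_n(H)$ commutes with everything in the sense of Huq. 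By \cite[Lemma 8.2]{GranSciandra}, applied after noting that $\pi$ is a central extension relative to \eqref{adj Hbr Hopfcc}, we get $\pi(h)\cdot\pi(i)=\pi(h)\bullet\pi(i)$. Hence $\pi(h\star i)=\varepsilon(h)\varepsilon(i)1$, so $h\star i\in\mathrm{Hker}(\pi)=N$.
\end{itemize}
Alternatively, one can argue directly: $N$ is normal, so Corollary \ref{cor:equivalentnotionsnormal} lets us switch between the two orders of the star product once we know $\cdot=\bullet$ modulo $N$.

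For the reverse inclusion, $\Gamma_{n+1}(H)$ is a normal Hopf subbrace by Proposition \ref{prop: Gamma_n normal}. Therefore it suffices to show that the generators $[i,h]_\cdot$, $[i,h]_\bullet$ and $i\star h$ of the Huq commutator lie in $\Gamma_{n+1}(H)$; the normal closure then stays inside it.
\begin{itemize}
\item $i\star h$ lies in $\Gamma_{n+1}(H)$ by definition.
\item $[i,h]_\cdot=S([h,i]_\cdot)$ lies in $\Gamma_{n+1}(H)$ as well.
\item For $[i,h]_\bullet=i_1\bullet h_1\bullet T(i_2)\bullet T(h_2)$, we pass to the quotient $\rho\colon H\to H/H\cdot\Gamma_{n+1}(H)^+$, a Hopf brace map. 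In the quotient, every $\rho(i)$ with $i\in\Gamma_n(H)$ satisfies $\rho(i)\star\rho(h)=\varepsilon\varepsilon 1=\rho(h)\star\rho(i)$ and commutes with $\rho(h)$ for $\cdot$.
\item By the computation \eqref{eq cdot=bullet} in the proof of Proposition \ref{centralExt}, this gives $\rho(h)\cdot\rho(i)=\rho(h)\bullet\rho(i)$ and $\rho(i)\cdot\rho(h)=\rho(i)\bullet\rho(h)$. Hence $\rho(i)$ also $\bullet$-commutes with $\rho(h)$, and $\rho([i,h]_\bullet)=\varepsilon(i)\varepsilon(h)1$.
\item Since $\Gamma_{n+1}(H)$ is normal, $\mathrm{Hker}(\rho)=\Gamma_{n+1}(H)$, so $[i,h]_\bullet\in\Gamma_{n+1}(H)$.
\end{itemize}

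The main obstacle is making the quotient arguments rigorous at the Hopf level. One has to check that the conditions on $\rho(\Gamma_n(H))$ hold coalgebraically, which requires $\Delta(\Gamma_n(H))\subseteq\Gamma_n(H)\ot\Gamma_n(H)$ so that Sweedler manipulations stay inside the relevant subspaces. One also needs the identification $\mathrm{Hker}(\rho)=\Gamma_{n+1}(H)$ for a normal Hopf subbrace, which follows from \cite[Proposition 4.6]{GranSciandra}. If the quotient route is awkward, the fallback is to expand $[i,h]_\bullet$ directly using $a\bullet b=a_1\cdot(a_2\rightharpoonup b)$ and $a\rightharpoonup b=(a\star b_1)\cdot b_2$. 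This rewrites it as a product of star products and $\cdot$-commutators of elements of $\Gamma_n(H)$ with elements of $H$, using point 3) of Lemma \ref{lem:propertiesasterisk} to handle the action.
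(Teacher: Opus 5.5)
Your proof is correct in substance and follows the paper's overall plan: two inclusions, each checked on generators, using normality of $\Gamma_{n+1}(H)$ and of $N=[\Gamma_n(H),H]_{\mathsf{Huq}}$. The two key steps are justified differently, though.

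For $\Gamma_{n+1}(H)\subseteq N$, the paper simply invokes the symmetry $[I,J]_{\mathsf{Huq}}=[J,I]_{\mathsf{Huq}}$, so that $h\star i$ and $[h,i]_\cdot$ become generators of $[H,\Gamma_n(H)]_{\mathsf{Huq}}$. You instead prove the needed instance of symmetry by hand, via $S([i,h]_\cdot)=[h,i]_\cdot$ and a computation in $H/H\cdot N^+$. This uses only the displayed generators of $[I,H]_{\mathsf{Huq}}$.

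For the reverse inclusion, both arguments reduce to $[i,h]_\bullet\in\Gamma_{n+1}(H)$. The paper dispatches this in one line ``because $\Gamma_{n+1}(H)$ is normal''. Your computation in $H/H\cdot\Gamma_{n+1}(H)^+$ spells out what that line compresses: both star products vanish, so $\cdot$ and $\bullet$ agree on the relevant pairs by $x\bullet y=x_1\cdot(x_2\star y_1)\cdot y_2$, and $\cdot$-commutation then transfers to $\bullet$-commutation. The paper's route is shorter because it leans on a general categorical fact; yours is self-contained.

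Three details need fixing.

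\emph{The map $\pi$ is not central.} $\pi\colon H\to H/H\cdot N^+$ is not a central extension in general; that would mean $[N,H]_{\mathsf{Huq}}=\Bbbk1$. What holds is that $\pi(\Gamma_n(H))$ Huq-commutes with the quotient. You need no lemma here at all. Since $\pi$ kills $[i,h]_\cdot$, $[i,h]_\bullet$ and $i\star h$, the identity above gives $\pi(i)\bullet\pi(h)=\pi(i)\cdot\pi(h)$. Hence $\pi(h)\bullet\pi(i)=\pi(i)\bullet\pi(h)=\pi(i)\cdot\pi(h)=\pi(h)\cdot\pi(i)$, and so $\pi(h\star i)=\varepsilon(h)\varepsilon(i)1$.

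\emph{The alternative route fails.} Your ``alternative'' via Corollary \ref{cor:equivalentnotionsnormal} does not work. That corollary only controls $a\star b$ and $b\star a$ for $b$ in $N$ itself, not for $i\in\Gamma_n(H)$.

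\emph{Membership in $\mathrm{Hker}$ needs more.} $\rho(y)=\varepsilon(y)1$ alone does not give $y\in\mathrm{Hker}(\rho)$. For instance, in $\Bbbk G$ modulo a normal subgroup $K$, take $y=g-g'+1$ with $g\notin K$ and $g\neq g'\in gK$. You must use three further facts:
\begin{itemize}
\item the vanishing holds for all $i\in\Gamma_n(H)$ and $h\in H$;
\item $\Delta([i,h]_\bullet)=[i_1,h_1]_\bullet\otimes[i_2,h_2]_\bullet$ and $\Delta(h\star i)=(h_1\star i_1)\otimes(h_2\star i_2)$;
\item $\Delta(\Gamma_n(H))\subseteq\Gamma_n(H)\otimes\Gamma_n(H)$.
\end{itemize}
Together these give $y_1\otimes\rho(y_2)=y\otimes1$. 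The same issue, with the same fix, arises for $\pi$ in the first inclusion.
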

\begin{proof}
    The inclusion from left to right follows from the fact that the Huq commutator is symmetric, namely, for any two normal Hopf subbraces $(I,\cdot,\bullet)$ and $(J,\cdot,\bullet)$ of $(H,\cdot,\bullet)$, we have $[I,J]_\mathsf{Huq}=[J,I]_\mathsf{Huq}$. Therefore, we have that
    \begin{align*}
    H\star \Gamma_n(H),[H,\Gamma_n(H)]_\cdot&\subseteq [H,\Gamma_n(H)]_\mathsf{Huq}=[\Gamma_n(H),H]_\mathsf{Huq}\supseteq\Gamma_{n}(H)\star H %\\
%[H,\Gamma_n(H)]_\cdot&\subseteq [H,\Gamma_n(H)]_\mathsf{Huq}=[\Gamma_n(H),H]_\mathsf{Huq} 
    \end{align*}
    which implies $\Gamma_{n+1}(H)\subseteq [\Gamma_n(H),H]_\mathsf{Huq}$. To prove the converse inclusion $[H,\Gamma_n(H)]_\mathsf{Huq}\subseteq \Gamma_{n+1}(H)$, we just have to verify that $[H,\Gamma_n(H)]_\bullet\subseteq\Gamma_{n+1}(H)$. But, for any $x\in H$ and $a\in \Gamma_n(H)$, we have:
    $$x_1\bullet a_1\bullet T(x_2)\bullet T(a_2)\in \Gamma_{n+1}(H)\bullet \Gamma_{n+1}(H)=\Gamma_{n+1}(H)$$
    because $\Gamma_{n+1}(H)$ is normal by Proposition \ref{prop: Gamma_n normal}.    
\end{proof}

Proceeding along the same lines as in Section \ref{section: Weak univ centr ext}, one can prove, for any cocommutative Hopf brace $(H,\cdot,\bullet)$, the existence of a weak $\mathcal{E}$-universal central extension with respect to the adjunction \eqref{adj Hbr Hopfcc}. The construction is similar to the one presented in Proposition \ref{thm: weak E-univ centr extension}, but replacing the commutator relative to \eqref{Adj HBr Hopf} with the Huq commutator: one considers the cokernel $\pi\colon (TV(B),\cdot,\bullet)\to (A,\cdot,\bullet)$ of the inclusion $[\Hker (\epsilon_B),TV(B)]_\mathrm{Huq}\hookrightarrow TV(B)$ in $\HBrc$. The regular epimorphism $\pi$ induces a unique morphism $f\colon (A,\cdot,\bullet)\to(B,\cdot,\bullet)$ that is a weak $\mathcal{E}$-universal central extension with respect to \eqref{adj Hbr Hopfcc}. %(i.e. with respect to the adjunction \ref{adj Hbr Hopfcc}).

Furthermore, the arguments presented in Section \ref{section: Weak univ centr ext} can also be adapted to the adjunction \eqref{adj Hbr Hopfcc}. For any cocommutative Hopf brace $(B,\cdot,\bullet)$, we can define:
\[
\mathsf{H}_1^{\mathsf{ab}}(B)=\mathsf{ab}(B)\quad\quad \mathsf{H}_2^{\mathsf{ab}}(B)= \frac{\mathrm{Hker}(p)\cap[P,P]_{\mathsf{Huq}}}{(\mathrm{Hker}(p)\cap[P,P]_{\mathsf{Huq}})\cdot[\mathrm{Hker}(p),P]_\mathsf{Huq}^{+}},\]
where $p:(P,\cdot,\bullet)\to(B,\cdot,\bullet)$ is an arbitrary $\mathcal{E}$-projective presentation of $(B,\cdot,\bullet)$. %(the definition of $\mathsf{H}_2(B)$ does not depend on the choice of $p$ for the same argument as Proposition \ref{prop: H2 Baer invariant}). 
Starting from any $\mathcal{E}$-projective sequence as \eqref{eq:shortexact}, one can derive another version of the Stallings-Stammbach exact sequence:
\[
\begin{tikzcd}
	\mathsf{H}_2^{\mathsf{ab}} (P)  & \mathsf{H}_2^{\mathsf{ab}} (B) & \frac{\mathrm{Hker}(p)}{\mathrm{Hker}(p)\cdot[\mathrm{Hker}(p), P]_{\mathsf{Huq}}^+} & \mathsf{H}_1^{\mathsf{ab}}(P) &  \mathsf{H}_1^{\mathsf{ab}}(B) & 0.
	\arrow[ from=1-1, to=1-2, "\mathsf{H}_2^{\mathsf{ab}}(p)"]
	\arrow[from=1-2, to=1-3, ""]
    \arrow[from=1-3, to=1-4, " "]
    \arrow[from=1-4, to=1-5, "\mathsf{H}_1^{\mathsf{ab}}(p)"]
    \arrow[from=1-5, to=1-6, " "]
\end{tikzcd}
\]
\noindent\textbf{Acknowledgments}. M. Bevilacqua's research is funded by a FRIA doctoral grant from the Fonds de la Recherche Scientifique - FNRS. M. Gran’s research was supported by the Fonds de la Recherche Scientifique - FNRS under Grant CDR No.J.0092.26. A. Sciandra was supported by a postdoctoral fellowship at the ULB in the framework of the PDR project ``Reconstruction of modules and algebraic objects from closed and monoidal structures on their representation categories'' funded by the FNRS under the grant number T.0318.25F (PI J. Vercruysse). This paper was written while he was member of the GNSAGA-INdAM.

\bibliographystyle{amsplain}

%    Bibliographies can be prepared with BibTeX using amsplain,
%    amsalpha, or (for "historical" overviews) natbib style.

\end{document}